\newcommand\Zset{\mathbb {Z}}
\newcommand\Cset{\mathbb {C}}
\newcommand\Qset{\mathbb {Q}}
\newcommand\Qrmax{Q^r_{\mathrm{max}}}
\newcommand\End{\mathrm{End}}
\newcommand\ann{{\mathrm{ann}}}
\newtheorem{lemma}{Lemma}[section]
\newtheorem{theorem}[lemma]{Theorem}
\newtheorem{corollary}[lemma]{Corollary}
\newtheorem{prop}[lemma]{Proposition}
\theoremstyle{definition}
\newtheorem{example}[lemma]{Example}
\begin{document}

\title{Classes of almost clean rings}

\author{Evrim Akalan}
\address{Department of Mathematics\\ Hacettepe University, Beytepe Campus\\ Ankara, 06532 Turkey}
\email{eakalan@hacettepe.edu.tr}

\author{Lia Va\v s}
\address{Department of Mathematics, Physics and Statistics\\
University of the Sciences\\
Phila\-delphia, PA 19104, USA}
\email{l.vas@usciences.edu}

\thanks{This work was supported by the Visiting Scientists Fellowship Program grant from The Scientific and Technological Research Council of Turkey (TUBITAK) and was carried out in part during the visit of the second author to the  Hacettepe University in July 2011. The second author is grateful to the faculty and staff of Hacettepe University for their hospitality and support. The authors are also grateful to all who helped improve the language of the paper.
}

\subjclass[2000]{
16U99,  
16W99, 
16W10, 
16S99
}

\keywords{Clean, almost clean, quasi-continuous, nonsingular, Rickart, abelian and CS rings}

\begin{abstract}
A ring is clean (almost clean) if each of its elements is the sum of a unit (regular element) and an
idempotent. A module is clean (almost clean) if its endomorphism ring is clean (almost clean). We show that every quasi-continuous and nonsingular module is almost clean and that every right CS (i.e. right extending) and right nonsingular ring is almost clean. As a corollary, all right strongly semihereditary rings, including finite $AW^*$-algebras and noetherian Leavitt path algebras in particular, are almost clean.

We say that a ring $R$ is special clean (special almost clean) if each element $a$ can be decomposed as the sum of a unit  (regular element) $u$ and an idempotent $e$ with $aR\cap eR=0.$ The Camillo-Khurana Theorem characterizes unit-regular rings as special clean rings. We prove an analogous theorem for abelian Rickart rings: an abelian ring is Rickart if and only if it is special almost clean. As a corollary, we show that a right quasi-continuous and right nonsingular ring is left and right Rickart.

If a special (almost) clean decomposition is unique, we say that the ring is uniquely special  (almost) clean. We show that (1) an abelian  ring is unit-regular (equiv. special clean) if and only if it is uniquely special clean, and that (2) an abelian and right quasi-continuous ring is Rickart (equiv. special almost clean) if and only if it is  uniquely special almost clean.

Finally, we adapt some of our results to rings with involution: a $*$-ring is $*$-clean (almost $*$-clean) if each of its elements is the sum of a unit (regular element) and a projection (self-adjoint idempotent). A special (almost) $*$-clean ring is similarly defined by replacing ``idempotent'' with ``projection'' in the appropriate definition. We show that an abelian $*$-ring is a Rickart $*$-ring if and only if it is special almost $*$-clean, and that an abelian $*$-ring is $*$-regular if and only if it is special $*$-clean.
\end{abstract}

\maketitle

\section*{Introduction}

A ring is {\em clean} if each of its elements can be written as the sum
of a unit and an idempotent. W. K. Nicholson introduced the concept of clean rings in the late 1970s. Since then, some stronger concepts (e.g. uniquely clean, strongly clean, and special clean rings) have been considered, as well as some weaker ones (e.g. almost clean rings).

A ring is {\em almost clean} if each of its elements can be written as the sum of a regular element (neither a left nor a right zero-divisor) and an idempotent. Almost clean rings were introduced in \cite{Warren_CX} for commutative rings where it is shown that a commutative Rickart ring is almost clean. In most papers so far, almost cleanness is considered in the commutative case. One of the exceptions is \cite{Lia_clean} which proves that certain Baer $*$-rings (in particular finite $AW^*$-algebras of type $I$) that are not necessarily commutative are almost clean. This result was shown by embedding such a Baer $*$-ring $R$ in the maximal right ring of quotients $\Qrmax(R).$ The ring $\Qrmax(R)$ is unit-regular (thus clean) and has the same projections (self-adjoint idempotents) as $R$. In this situation, the cleanness of $\Qrmax(R)$ implies that the ring $R$ is almost clean.

In this paper, we extend and generalize the idea of \cite{Lia_clean}: we consider a class of rings $R$ with the property that $R$ embeds in a clean ring with the same idempotents as $R$. All right quasi-continuous and right nonsingular rings have this property. We show that they are almost clean (Proposition \ref{quasi_cont_nonsingular}). We also generalize our results to modules (Theorem \ref{quasicont_nonsingular_is_almost_clean}). In Theorem \ref{CS_is_almost_clean}, we show that the assumption that $R$ is right quasi-continuous (C1+C3) can be relaxed to the assumption that $R$ is right CS (right extending, i.e. (C1)).

As a corollary, we also show that the class of right strongly semihereditary rings, studied in \cite{Lia_dimension}, is almost clean (Corollary \ref{AW_Leavitt_etc_corollary}). Consequently, all finite $AW^*$-algebras are almost clean. This fact extends the results in \cite{Lia_clean}: we now know that all finite $AW^*$-algebras are almost clean, not just finite $AW^*$-algebras of type $I.$ In part, this result contributes to determining those von Neumann algebras that are clean (an initiative started by T.Y. Lam). In addition, our result also implies that all noetherian Leavitt path algebras (Leavitt path algebras over finite no-exit graphs) are almost clean.

Clean rings are an additive analogue of unit-regular rings. In a unit-regular ring, each element can be written as the product of a unit and an idempotent. In the case of clean rings, ``the product'' in the last condition changes to ``the sum''. The Camillo-Khurana Theorem in \cite{Camillo} characterizes unit-regular rings as clean rings in which each element $a$ has the form $a=u+e$ where $u$ is a unit and $e$ is an idempotent with $aR\cap eR=0.$ Following the terminology used in \cite{Gene-Ranga}, we refer to the rings satisfying the last condition as {\em special clean} rings.

Our goal is to establish a result analogous to the Camillo-Khurana Theorem: the exact relation between abelian Rickart rings, the rings in which each element can be written as the product of a regular element and an idempotent, and their additive analogues, almost clean rings. We show that an abelian ring is Rickart if and only if each element $a$ has the form $a=u+e$ where $u$ is a regular element and $e$ is an idempotent with $aR\cap eR=0$ (Theorem \ref{Almost_clean_Camillo_Khurana}).  We refer to the rings satisfying the last condition as {\em special almost clean} rings. Interestingly, this result has a corollary that a right quasi-continuous and right nonsingular ring is both left and right Rickart (Corollary \ref{abelian_dropped}). Note that \cite[Theorem 3.2]{Beidar_et_al} demonstrates that a right quasi-continuous and right nonsingular ring $R$ is right Rickart. In this situation, our result shows that $R$ is left Rickart as well.

We show that in the abelian case, the Camillo-Khurana Theorem can be strengthened to state that $R$ is unit-regular if and only if it is {\em uniquely special clean}, i.e. special clean decompositions are unique (Proposition \ref{uniquely_Camillo_Khurana}). As a corollary, we deduce that all abelian,  right quasi-continuous, right nonsingular rings are {\em uniquely special almost clean}, i.e. special almost clean decompositions are unique (Corollary \ref{uniquely_special_almost_clean}). Furthermore, an abelian, right quasi-continuous ring is Rickart if and only if it is uniquely special almost clean (Corollary \ref{uniquely_Rickart_corollary}).

Finally, we turn to $*$-rings and study their cleanness in the context of the presence of an involution. In \cite{Lia_clean}, a ring with involution is said to be {\em $*$-clean} ({\em almost $*$-clean}) if each of its elements is the sum of a unit (regular element) and a projection. We define {\em special $*$-clean} and {\em special almost $*$-clean} rings by replacing ``idempotent'' with ``projection'' in the definitions of special clean and special almost clean, respectively. We show the $*$-version of our characterization of abelian Rickart rings: an abelian $*$-ring is a Rickart $*$-ring if and only if it is special  almost $*$-clean (Theorem \ref{star_almost_clean_Camillo_Khurana}). We also show the $*$-version of the Camillo-Khurana Theorem in the abelian case: an abelian $*$-ring is $*$-regular if and only if it is special $*$-clean (Theorem \ref{star_clean_Camillo_Khurana}).

The paper is organized as follows. In Section \ref{section_preliminaries}, we recall some known concepts and results. In Section \ref{section_almost_clean}, we prove the results related to the almost cleanness of quasi-continuous or CS rings and modules. In Section \ref{section_Rickart}, we prove a theorem on abelian Rickart rings analogous to the Camillo-Khurana Theorem and derive several related corollaries. In Section \ref{section_uniqueness}, we study the uniqueness of special clean and almost special clean decompositions. In Section \ref{section_star_rings}, we adapt our earlier results to rings with involution. We conclude the paper with a list of open problems in Section \ref{section_questions}.

\section{Preliminaries}
\label{section_preliminaries}

In this paper, a ring is an associative ring with identity. We use $\ann^R_r(x)$ and $\ann^R_l(x)$ to denote the right and left annihilators of an element $x$ of a fixed ring $R.$ We use $\ann_r(x)$ and $\ann_l(x)$ when it is clear that these annihilators are in a ring $R$. Throughout the paper, we use the definitions of a regular ring, a unit-regular ring, the maximal right (left) ring of quotients of a ring, an essential submodule and related concepts as given in \cite{Lam}.

First we recall the conditions (C1)--(C3). Let $R$ be a ring and $M$ a right $R$-module. Consider the following three conditions.
\begin{itemize}
\item[(C1)] Every submodule of $M$ is essential inside a summand of $M.$

\item[(C2)] Every submodule of $M$ that is isomorphic to a summand of $M$ is
itself a summand of $M.$

\item[(C3)] If $A$ and $B$ are summands of $M$ with $A \cap B = 0$, then
$A\oplus B$ is also a summand of $M.$
\end{itemize}

These conditions give rise to the following definitions.
\begin{itemize}
\item[-] $M$ is called a {\em CS (or extending)} module if it satisfies (C1).

\item[-] $M$ is called a {\em continuous} module if it satisfies (C1) and (C2).

\item[-] $M$ is called a {\em quasi-continuous (or $\pi$-injective)} module if
it satisfies (C1) and (C3).
\end{itemize}
A ring $R$ is right CS (right quasi-continuous or right continuous) if $R_R$ is CS (quasi-continuous or continuous).

As (C2) implies (C3), a continuous module is quasi-continuous (\cite[Exercise 36, p. 245]{Lam}). Also, if $R$ is regular, then the following are equivalent: $R$ is continuous, $R$ is quasi-continuous and $R$ is CS ( \cite[Exercise 36, p. 246]{Lam}).

The following result of Goel and Jain from \cite{Goel_Jain} can also be found in \cite[Exercise 37, p. 245]{Lam}.

\begin{prop}
\cite{Goel_Jain}
For any module $M$, the following are equivalent.
\begin{enumerate}
\item $M$ is quasi-continuous,

\item Any idempotent endomorphism of a submodule of $M$ extends to an
idempotent endomorphism of $M.$

\item $M$ is invariant under any idempotent endomorphism of the injective envelope $E(M).$
\end{enumerate}
\label{prop_on_quasi-continuous}
\end{prop}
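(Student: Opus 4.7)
The plan is to prove the cycle $(1) \Rightarrow (3) \Rightarrow (2) \Rightarrow (1)$. The unifying idea is that an idempotent endomorphism of a module $X$ is the same data as a direct sum decomposition $X = X_1 \oplus X_2$, so each of (1)--(3) is an assertion about how to transfer such decompositions between $E(M)$, $M$, and submodules of $M$.

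For $(1) \Rightarrow (3)$, I would let $e$ be an idempotent endomorphism of $E(M)$ and set $E_1 = e(E(M))$, $E_2 = (1-e)(E(M))$, and $N_i = M \cap E_i$. Each $N_i$ is essential in $E_i$ (because $M$ is essential in $E(M)$), so $E_i = E(N_i)$. By (C1), $N_i$ is essential in a summand $A_i$ of $M$; since essential extensions share a common injective hull, $A_i \subseteq E(A_i) = E(N_i) = E_i$ inside $E(M)$. Hence $A_1 \cap A_2 = 0$ and (C3) makes $A_1 \oplus A_2$ a summand of $M$. The key technical step is to verify that $N_1 \oplus N_2$ is essential in $M$: for $x = x_1 + x_2 \in M$ with $x_i \in E_i$, one applies the essentiality of $x_iR \cap N_i$ in $x_iR$ twice to produce a single $r \in R$ with $0 \neq xr \in N_1 \oplus N_2$. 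Then $A_1 \oplus A_2$ is an essential summand of $M$, so $A_1 \oplus A_2 = M$, and writing $x = a_1 + a_2$ gives $e(x) = a_1 \in M$.

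For $(3) \Rightarrow (2)$, let $e$ be an idempotent endomorphism of a submodule $N \subseteq M$. Since $eN \cap (1-e)N = 0$, the same essentiality argument shows $E(eN) \cap E((1-e)N) = 0$ in $E(M)$, and hence $E(eN) \oplus E((1-e)N)$ is a summand of $E(M)$ (being injective). The projection $\pi$ of $E(M)$ onto $E(eN)$ along a chosen complement of this summand is an idempotent endomorphism of $E(M)$ that restricts to $e$ on $N$. By (3), $\pi(M) \subseteq M$, so $\pi|_M$ is the required idempotent extension.

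For $(2) \Rightarrow (1)$, I would verify (C1) and (C3) separately. For (C1), take a complement $K$ of a submodule $N$ in $M$, so that $N \oplus K$ is essential in $M$; extend the projection $N \oplus K \to N$ to an idempotent $\bar e$ of $M$. Then $\bar e(M)$ is a summand containing $N$, and $N$ is essential in it: for $0 \neq x \in \bar e(M)$, essentiality of $N \oplus K$ gives $r$ with $0 \neq xr \in N \oplus K$, and then $xr = \bar e(xr) \in N$. For (C3), given summands $A, B$ of $M$ with $A \cap B = 0$, extend the projection $A \oplus B \to A$ to an idempotent $\bar e$ of $M$; writing $M = A \oplus A'$, a direct check yields $\bar e(M) = A \oplus (\bar e(M) \cap A')$, so $A$ is a summand of $\bar e(M)$, and symmetrically $B$ is a summand of $(1-\bar e)(M)$, making $A \oplus B$ a summand of $M$. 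I expect the essentiality verification in $(1) \Rightarrow (3)$ to be the main obstacle; the remaining steps reduce, after the right setup, to formal manipulations with idempotents and their associated decompositions.
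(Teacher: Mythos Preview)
The paper does not supply its own proof of this proposition: it is stated with a citation to Goel--Jain and to \cite[Exercise~37, p.~245]{Lam}, and no \texttt{proof} environment follows. So there is no in-paper argument to compare against.

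That said, your proposed proof is correct and follows the standard route. The cycle $(1)\Rightarrow(3)\Rightarrow(2)\Rightarrow(1)$ is the usual one, and each step is handled properly. In $(1)\Rightarrow(3)$ the essentiality of $N_1\oplus N_2$ in $M$ is indeed the only point requiring care, and your two-step multiplication argument handles it (note that $x_1r_1r_2\in N_1$ and $0\neq x_2r_1r_2\in N_2$ forces $xr_1r_2\neq 0$ since $E_1\cap E_2=0$). In $(3)\Rightarrow(2)$ you should be explicit that the injective hulls $E(eN)$ and $E((1-e)N)$ are chosen \emph{inside} $E(M)$ so that they intersect trivially; this is possible, but it is a choice, not an automatic consequence. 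In $(2)\Rightarrow(1)$, your verification of (C3) is fine: once $A$ is a summand of $\bar e(M)$ and $B$ is a summand of $(1-\bar e)(M)$, the decomposition $M=\bar e(M)\oplus(1-\bar e)(M)$ refines to exhibit $A\oplus B$ as a summand.
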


Now, let us turn to the preliminaries on clean rings. A ring element is right (left) regular if it does not have nontrivial right (left) annihilators. It is regular if it is left and right regular. Note that we use the term ``regular'' as in, for example, \cite{Lam} and {\em not} in the sense that $a\in R$ is regular if $a=axa$ for some $x\in R.$

We have reviewed the definitions of (almost) clean, special (almost) clean, and uniquely special (almost) clean rings in the introduction. Using the term ``special clean'', the Camillo-Khurana Theorem can be stated as follows.

\begin{theorem}\cite[Theorem 1]{Camillo} A ring $R$ is unit-regular if and only if it is special clean.
\label{Camillo_Khurana}
\end{theorem}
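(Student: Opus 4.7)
The statement is the Camillo--Khurana theorem, quoted from \cite{Camillo}; I would reproduce their strategy in two implications. The easy direction is special clean $\Rightarrow$ unit-regular: given $a=u+e$ as in the hypothesis, rearrange to $au^{-1}-eu^{-1}=1$ and right-multiply by $a$ to obtain
\[
au^{-1}a-a=eu^{-1}a.
\]
The left side lies in $aR$ and the right in $eR$, so both belong to $aR\cap eR=0$, whence $au^{-1}a=a$. Thus $u^{-1}$ is a unit inner inverse of $a$, and $R$ is unit-regular.

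For the forward direction, I would first observe that the intersection condition $aR\cap eR=0$ will be \emph{automatic} once $u$ is a unit with $u^{-1}$ an inner inverse of $a$, even before verifying that $e:=a-u$ is an idempotent. Indeed, if $ar=es=(a-v^{-1})s$ with $v:=u^{-1}$, then $a(r-s)=-v^{-1}s$; left-multiplying by $v$ and writing $q:=va$ (an idempotent since $(va)^2=v(ava)=va$) gives $qr=-(1-q)s$. Both sides lie in $qR\cap(1-q)R=0$, so $qr=0$, and applying $v^{-1}$ yields $ar=0$, hence $es=ar=0$.

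Thus the forward implication reduces to producing a unit $v$ with $ava=a$ and $(a-v^{-1})^2=a-v^{-1}$. The naive $v$ guaranteed by unit-regularity need not work: for $a=\bigl(\begin{smallmatrix}0&1\\0&0\end{smallmatrix}\bigr)\in M_2(\mathbb{Q})$, the unit $v=\bigl(\begin{smallmatrix}0&-1\\1&0\end{smallmatrix}\bigr)$ satisfies $ava=a$ but $a-v^{-1}=\bigl(\begin{smallmatrix}0&0\\1&0\end{smallmatrix}\bigr)$ is nilpotent, whereas $v'=\bigl(\begin{smallmatrix}0&-1\\1&-1\end{smallmatrix}\bigr)$ yields the idempotent $e=\bigl(\begin{smallmatrix}1&0\\1&0\end{smallmatrix}\bigr)$ and a unit $u=a-e$. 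To produce the correct $v$ in general, I would start from any $v_0$ with $av_0a=a$, pass to the Peirce decomposition of $R$ along $q_0:=v_0a$, and modify $v_0$ inside the off-diagonal Peirce blocks, using that the corners $q_0Rq_0$ and $(1-q_0)R(1-q_0)$ are themselves unit-regular, to enforce the idempotency identity.

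The main obstacle is exactly this last step: finding a unit $v$ with $ava=a$ whose inverse, when subtracted from $a$, is idempotent. The backward direction and the intersection condition are short, and everything substantive is concentrated in constructing the right inner inverse. This is where unit-regularity (as opposed to mere regularity) is used essentially: one exploits the full flexibility in the set of inner inverses of $a$ to find one for which the special clean decomposition materializes.
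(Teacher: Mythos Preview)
The paper does not prove this theorem; it merely quotes \cite[Theorem~1]{Camillo} as background, so there is no ``paper's own proof'' to compare against. I will therefore assess your sketch on its own.

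Your backward implication is correct and complete: from $a=u+e$ you deduce $au^{-1}a-a=eu^{-1}a\in aR\cap eR=0$, so $u^{-1}$ is a unit inner inverse of $a$. Your observation that $aR\cap eR=0$ follows automatically from $ava=a$ with $v=u^{-1}$ (regardless of whether $e=a-v^{-1}$ is idempotent) is also correct and a nice reduction.

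The forward implication, however, has a genuine gap. You have reduced it to the statement: \emph{given $a$ in a unit-regular ring, there is a unit $v$ with $ava=a$ and $(a-v^{-1})^2=a-v^{-1}$}. But you do not prove this. Your plan to ``modify $v_0$ inside the off-diagonal Peirce blocks'' is not an argument: you have not said which modification to make, nor why unit-regularity of the corners would force the idempotency identity to hold. Writing out $(a-v^{-1})^2=a-v^{-1}$ with $q=va$ and $p=av$ gives the constraint $a(1-q)=(1-p)v^{-1}(1-q)$, and it is not at all clear how tweaking $v_0$ in the Peirce corners achieves this while preserving both invertibility of $v$ and the relation $ava=a$. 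Your $2\times 2$ example shows you know a correction is needed, but finding it in general is the entire content of the theorem.

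For comparison, Camillo and Khurana's original argument is module-theoretic rather than element-wise: unit-regularity gives internal decompositions $R_R=aR\oplus K=\ann_r(a)\oplus L$ with $K\cong \ann_r(a)$; one takes $e$ to be the idempotent with $eR=K$ and $(1-e)R=aR$, and builds $u$ as a sum of isomorphisms $aR\to L$ and $K\to \ann_r(a)$. The idempotency of $e$ and invertibility of $u$ are then immediate from the decompositions, and $aR\cap eR=0$ is built in. If you want to complete your element-wise route, you would effectively have to reconstruct this decomposition argument inside your Peirce framework; as it stands, the crucial step is missing.
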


Recall that an $R$-module $M$ is called (almost) {\em clean} if its endomorphism ring is (almost) clean. In \cite[Theorem 3.9]{Camillo_et_all} one finds the deep result that a continuous module is clean.

Finally, we note the following.
\begin{prop}\cite[Proposition 4.5]{Camillo_et_all}
If $M$ is CS, then each endomorphism of $M$ is the sum of an idempotent
and a monomorphism.
\label{CS_right_almost_clean}
\end{prop}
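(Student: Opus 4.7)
The plan is to exploit the CS property of $M$ to find a direct summand decomposition adapted to $f$ and to construct an idempotent endomorphism $e \in \End(M)$ for which $g = f - e$ is injective. Since $M$ is CS, $\ker f$ is essential in some direct summand $K$ of $M$; fix such a $K$ together with a complementary summand $L$, so $M = K \oplus L$.

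Two auxiliary facts follow from this essentiality and will underpin the injectivity argument. First, $f|_L$ is injective, because $\ker f \cap L \subseteq K \cap L = 0$. Second, $f$ has no nonzero fixed point in $K$: if $a \in K$ with $f(a) = a$ and $a \ne 0$, then $aR$ is a nonzero submodule of $K$, so by essentiality of $\ker f$ in $K$ there is $r \in R$ with $ar \ne 0$ and $ar \in \ker f$; but $f(ar) = f(a)r = ar$, so $ar = f(ar) = 0$, a contradiction.

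The main step is the construction of $e$. The natural first guess---the projection $\pi_K$ onto $K$ along $L$---yields $\ker(f - \pi_K) = \{k + l : f(k) + f(l) = k\}$, and the two auxiliary facts immediately rule out kernel elements lying wholly in $K$ (reducing to $f(k) = k$) or wholly in $L$ (reducing to $l \in \ker f \cap L = 0$). However, for \emph{mixed} elements with both components nonzero this naive choice can genuinely fail (for instance when $f(L) \subseteq K$), so the idempotent must be chosen more cleverly: I would look for a modified complement of $K$ of the form $\{l + \lambda(l) : l \in L\}$ for a suitably tuned $R$-homomorphism $\lambda : L \to K$, and take $e$ to be the projection onto $K$ along this new complement. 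The main obstacle is the simultaneous fulfillment of idempotency of $e$ and injectivity of $f - e$ on the mixed part of $M$; I expect this to require a Zorn or propagation argument in which a hypothetical nonzero $x \in \ker(f-e)$ is replaced by the cyclic submodule $xR$ (which also lies in $\ker(f-e)$) so that applying essentiality of $\ker f$ to its $K$-component produces a contradiction analogous to the no-fixed-point argument above.
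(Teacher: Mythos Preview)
The paper does not prove this proposition; it is quoted from \cite[Proposition~4.5]{Camillo_et_all} and used as a black box, so there is no in-paper argument to compare against.

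That said, your attempt has a genuine gap. You correctly observe that the naive choice $e=\pi_K$ (projection onto $K$ along $L$) can fail, and you propose to repair this by replacing $L$ with a twisted complement $L'=\{\,l+\lambda(l):l\in L\,\}$ for some $\lambda\colon L\to K$, taking $e$ to be the projection onto $K$ along $L'$. But this repair cannot work: for \emph{any} such $\lambda$ one has $e(k+l)=k-\lambda(l)$, hence
\[
(f-e)(k+l)=f(k+l)-k+\lambda(l).
\]
Whenever $f(M)\subseteq K$ (as in your own cautionary scenario $f(L)\subseteq K$, or concretely $M=R^2$ over a field with $f=\left(\begin{smallmatrix}0&1\\0&0\end{smallmatrix}\right)$ and $K=Re_1$), the right-hand side lies entirely in $K$; choosing any $l\neq 0$ and setting $k=f(l)+\lambda(l)$ yields a nonzero element of $\ker(f-e)$. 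Thus \emph{no} idempotent with image $K$ can serve as $e$, and the ``modified complement of $K$'' strategy is doomed regardless of how $\lambda$ is tuned. The closing appeal to a Zorn or propagation argument does not address this obstruction, because the problem is not a missing estimate but the wrong target summand.

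The upshot is that the idempotent in the Camillo--Khurana--Lam--Nicholson--Zhou argument is not a projection onto the essential closure $K$ of $\ker f$; a different direct-summand decomposition must be used. (In the $2\times 2$ example above, $e=I$ and $e=\left(\begin{smallmatrix}0&0\\1&1\end{smallmatrix}\right)$ both make $f-e$ injective, and neither has image $K$.) You should consult the original source for the correct construction rather than trying to patch the present one.
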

Since an injective endomorphism of $M$ is a right regular element of the endomorphism ring $\End(M)$, we have the following implications for an arbitrary element $f$ of $\End(M)$.

$$\xymatrix{ \mbox{ CS }\;\; \ar[r] & \;\;f=\mbox{ idempotent }+\mbox{ mono.} \;\;\ar[r] & \;\;f=\mbox{ idempotent }+\mbox{ right regular }}$$

\section{Almost cleanness of quasi-continuous rings and modules}
\label{section_almost_clean}

Proposition \ref{prop_on_quasi-continuous} implies that a ring
$R$ and its
injective envelope $E(R)$ have the same idempotent endomorphisms if and only if
$R$ is right quasi-continuous.
In the case that $R$ is right nonsingular, the maximal right ring of quotients
$\Qrmax(R)$ is the
injective envelope $E(R).$
In this case, $\Qrmax(R)$ and $R$ have the same idempotents if and only if $R$
is right quasi-continuous.

Using an idea from \cite[Proposition 8]{Lia_clean}, we prove the following.

\begin{prop} If $R$ embeds in a clean ring that has the same idempotents as $R$,
then $R$ is almost clean. If $R$ is also regular, then $R$ is clean.
\label{embedding_prop}
\end{prop}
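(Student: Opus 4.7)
The plan is to transplant a clean decomposition from the overring $S$ into $R$ and then verify that the unit part, viewed inside $R$, behaves as a regular element. The hypothesis $\mathrm{Idem}(R)=\mathrm{Idem}(S)$ is precisely what lets us pull the decomposition back into $R$.

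Fix $a\in R$. Since $S$ is clean, I would first write $a=u+e$ with $u$ a unit in $S$ and $e$ an idempotent in $S$. By the hypothesis that $R$ and $S$ have the same idempotents, $e\in R$; hence $u=a-e\in R$ as well. So the decomposition already lives in $R$, and the only remaining point is the nature of $u$ as an element of $R$.

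Next I would check that $u$ is a regular element of $R$. If $r\in R$ and $ur=0$, then applying $u^{-1}\in S$ on the left gives $r=u^{-1}(ur)=0$, so $\ann^R_r(u)=0$. The symmetric argument, multiplying $ru=0$ by $u^{-1}$ on the right, yields $\ann^R_l(u)=0$. Thus $u$ is regular in $R$, and $a=u+e$ is an almost clean decomposition of $a$ in $R$. Since $a$ was arbitrary, $R$ is almost clean.

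For the second assertion, assume in addition that $R$ is (von Neumann) regular, and write $u=uxu$ for some $x\in R$. Then $u(xu-1)=0$ and $(ux-1)u=0$; using the two annihilator conditions established in the previous step, we obtain $xu=1$ and $ux=1$, so $u$ is a unit of $R$. Consequently $a=u+e$ is in fact a clean decomposition, and $R$ is clean. There is no serious obstacle here: the whole argument rests on the simple but effective observation that sharing idempotents is exactly what is needed to carry the unit $u$ back into $R$, while the inverse $u^{-1}$ may be left behind in $S$ — this loss is precisely why the conclusion weakens from ``clean'' to ``almost clean'' in the general case, and why regularity of $R$ restores the full conclusion.
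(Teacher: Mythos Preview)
Your proof is correct and follows essentially the same approach as the paper: transplant the clean decomposition $a=u+e$ from the overring into $R$ via the shared-idempotents hypothesis, then observe that $u$ has trivial one-sided annihilators in $R$ because it is a unit in the overring. The only cosmetic difference is that the paper phrases the annihilator check as $0=\ann^Q_r(u)\supseteq\ann^R_r(u)$ and cites the fact that a regular element of a von Neumann regular ring is a unit, whereas you carry out both computations explicitly.
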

\begin{proof}
Let $Q$ denote a clean ring with the same idempotents as $R$ in which $R$ embeds. Let $a$ be an
element in $R$.
Then $a$ is in $Q$ as well. Thus, $a=u+e$ for some idempotent $e\in Q$ and unit
$u\in Q.$ By assumption, $e$ is in $R$. Thus, $u=a-e$ is in $R$ as well. Since $u$ is
a unit in $Q,$
$0=\ann^Q_r(u)\supseteq\ann_r^R(u)$ and the same holds for the left
annihilators.
Thus, $u$ is regular.

The last sentence of the proposition follows since each regular element of a von Neumann regular ring is a unit.
\end{proof}

\begin{example}
We note that Proposition \ref{embedding_prop} is not valid if ``almost'' is
deleted since $\Zset$ can be embedded in $\Qset$ with the same idempotents (0 and 1) and
$\Zset$ is not clean.
\end{example}

\begin{prop}
If $R$ is right quasi-continuous and right nonsingular, then $R$ is almost
clean.
\label{quasi_cont_nonsingular}
\end{prop}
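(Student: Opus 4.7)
The plan is to apply Proposition \ref{embedding_prop} with $Q=\Qrmax(R)$ in the role of the clean overring of $R$. Three ingredients need to be checked: (i) $R$ embeds in $\Qrmax(R)$; (ii) $R$ and $\Qrmax(R)$ share the same idempotents; and (iii) $\Qrmax(R)$ is a clean ring.

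For (i), I would invoke the classical Johnson--Utumi theory: right nonsingularity of $R$ ensures that $\Qrmax(R)$ is a von Neumann regular, right self-injective ring containing $R$, and that $\Qrmax(R) \cong E(R_R)$ as right $R$-modules. Statement (ii) is exactly the observation recorded in the paragraph immediately preceding the proposition: since $R$ is right quasi-continuous, Proposition \ref{prop_on_quasi-continuous}(3) says that $R$ is invariant under every idempotent endomorphism of $E(R)=\Qrmax(R)$; viewing each idempotent of $\Qrmax(R)$ as the idempotent endomorphism given by left multiplication, this invariance forces every such idempotent to lie in $R$.

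The remaining, and main, step is (iii). Here I would argue that right self-injectivity makes $\Qrmax(R)$ continuous as a right module over itself, since an injective module trivially satisfies both (C1) and (C2). The deep result \cite[Theorem 3.9]{Camillo_et_all} quoted in the preliminaries, that every continuous module is clean, then yields that the endomorphism ring $\End(\Qrmax(R)_{\Qrmax(R)}) \cong \Qrmax(R)$ is clean as a ring. With (i)--(iii) in hand, Proposition \ref{embedding_prop} applies directly to give that $R$ is almost clean. The only nontrivial ingredient is (iii), which depends on the external cleanness theorem for continuous modules; the rest is a direct application of Proposition \ref{prop_on_quasi-continuous} together with standard facts about the maximal right ring of quotients of a right nonsingular ring.
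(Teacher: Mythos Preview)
Your proposal is correct and follows essentially the same route as the paper: both apply Proposition \ref{embedding_prop} with $Q=\Qrmax(R)$, using right nonsingularity to get that $\Qrmax(R)$ is regular and right self-injective (hence clean), and using right quasi-continuity via Proposition \ref{prop_on_quasi-continuous} to match idempotents. The only cosmetic difference is that the paper cites \cite[Corollary 3.12]{Camillo_et_all} directly for the cleanness of $\Qrmax(R)$, whereas you unpack that corollary by observing self-injective $\Rightarrow$ continuous $\Rightarrow$ clean via \cite[Theorem 3.9]{Camillo_et_all}.
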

\begin{proof} If $R$ is right nonsingular, $\Qrmax(R)$ is regular and right
self-injective.
Thus $\Qrmax(R)$ is clean by \cite[Corollary 3.12]{Camillo_et_all}.
Then $R$ is almost clean by Proposition \ref{embedding_prop}.
\end{proof}

In the case when $\Qrmax(R)$ of a right quasi-continuous ring $R$ is unit-regular, a stronger
conclusion than Proposition \ref{quasi_cont_nonsingular} holds as the following corollary shows.
\begin{corollary}
If $R$ is right quasi-continuous and $\Qrmax(R)$ is unit-regular, then $R$ is
special almost clean.
\label{Qunit-reg}
\end{corollary}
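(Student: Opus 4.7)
The plan is to combine the Camillo--Khurana Theorem applied to $\Qrmax(R)$ with the embedding argument used in Proposition \ref{embedding_prop}. The only new point beyond Proposition \ref{quasi_cont_nonsingular} is that we must additionally produce the annihilator condition $aR\cap eR=0$ defining special almost cleanness.

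First I would observe that the hypotheses force $R$ to be right nonsingular: by Johnson's theorem $\Qrmax(R)$ is regular if and only if $R$ is right nonsingular, and unit-regular implies regular. Hence $\Qrmax(R)=E(R)$, and by the remark right before Proposition \ref{embedding_prop}, right quasi-continuity of $R$ gives that $R$ and $Q:=\Qrmax(R)$ share the same idempotents. Set up this common ambient ring $Q$ once and for all.

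Next, since $Q$ is unit-regular, Theorem \ref{Camillo_Khurana} (Camillo--Khurana) says $Q$ is special clean. Given $a\in R\subseteq Q$, write $a=u+e$ with $u$ a unit of $Q$, $e=e^2\in Q$, and $aQ\cap eQ=0$. By the idempotent-sharing, $e\in R$, so $u=a-e\in R$. Exactly as in the proof of Proposition \ref{embedding_prop}, $\ann^R_r(u)\subseteq\ann^Q_r(u)=0$ and similarly on the left, so $u$ is regular in $R$.

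The last step, which I expect to be essentially cosmetic rather than a real obstacle, is transferring the intersection condition from $Q$ down to $R$: since $R\subseteq Q$, we have $aR\subseteq aQ$ and $eR\subseteq eQ$, so
\[
aR\cap eR \;\subseteq\; aQ\cap eQ \;=\; 0.
\]
Thus $a=u+e$ is a special almost clean decomposition of $a$ in $R$, and $R$ is special almost clean. No further verification is needed.
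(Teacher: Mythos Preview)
Your proof is correct and follows essentially the same route as the paper: apply Camillo--Khurana in $Q=\Qrmax(R)$, pull the idempotent back to $R$ via quasi-continuity, deduce regularity of $u$ as in Proposition~\ref{embedding_prop}, and intersect down to $R$. The one difference is that you explicitly justify, via Johnson's theorem, why $R$ is right nonsingular (hence $Q=E(R)$ and the idempotent-sharing applies), whereas the paper leaves this implicit in its appeal to quasi-continuity.
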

\begin{proof}
Since $\Qrmax(R)$ is unit-regular, it is special clean by Theorem \ref{Camillo_Khurana}. Let $a=u+e$
be a special clean decomposition in $\Qrmax(R)$ of an element $a$ of $R.$ Thus, $a\Qrmax(R)\cap e\Qrmax(R)=0.$ Then $aR\cap eR=0$ as well. The idempotent $e$ is in $R$ because $R$ is quasi-continuous. Thus, $u$ has to be in $R$ as well and it has to be regular just like in the proof of Proposition \ref{embedding_prop}. Thus, $a=u+e$ is a special almost clean decomposition of $a$ in $R.$
\end{proof}

Next we consider quasi-continuous modules and prove a stronger
version of Proposition \ref{quasi_cont_nonsingular} -- we prove
that it holds for modules as well. First, we show a preliminary
proposition. Recall that an endomorphism of a module $M$ is said to be essential if its image is an essential
submodule of $M$.

\begin{prop}
If $M$ is quasi-continuous, then every endomorphism of $M$ is the sum of an
idempotent and an essential monomorphism.
\label{first_arrow}
\end{prop}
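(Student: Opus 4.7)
My plan is to pass to the injective envelope $E(M)$ and exploit the characterization of quasi-continuity given by Proposition \ref{prop_on_quasi-continuous}(3). The key observation is that $E(M)$, being injective, is continuous, and hence clean by \cite[Theorem 3.9]{Camillo_et_all}. So any endomorphism of $E(M)$ decomposes as an idempotent plus an automorphism, and I only need to restrict this decomposition back to $M$ and verify essentiality.

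Given $f \in \End(M)$, I would first extend $f$ to an endomorphism $\hat f \in \End(E(M))$ by the injectivity of $E(M)$, so that $\hat f|_M = f$. Applying cleanness of $E(M)$, write $\hat f = \hat e + \hat u$, where $\hat e$ is an idempotent endomorphism of $E(M)$ and $\hat u$ is an automorphism of $E(M)$. By Proposition \ref{prop_on_quasi-continuous}(3), the module $M$ is invariant under $\hat e$, so $\hat e(M) \subseteq M$ and $e := \hat e|_M$ is an idempotent in $\End(M)$. Then $g := f - e$ maps $M$ into $M$ (since $f(M), e(M) \subseteq M$) and agrees with $\hat u|_M$; in particular $g$ is a monomorphism because $\hat u$ is injective.

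It remains to verify that $g(M)$ is essential in $M$. Given a nonzero submodule $N$ of $M$, the submodule $\hat u^{-1}(N)$ of $E(M)$ is nonzero because $\hat u$ is a bijection, so it meets $M$ nontrivially by essentiality of $M$ in $E(M)$; any nonzero element $m \in \hat u^{-1}(N) \cap M$ produces a nonzero element $g(m) = \hat u(m)$ of $g(M) \cap N$. I expect this essentiality verification to be the main (though minor) obstacle, as it is the one place where the bijectivity of $\hat u$ as an endomorphism of $E(M)$, rather than merely its injectivity on $M$, is genuinely used; the rest of the argument is a direct combination of the Camillo--Khurana-type cleanness of injective modules with the Goel--Jain invariance characterization of quasi-continuous modules.
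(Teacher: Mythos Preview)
Your proposal is correct and follows essentially the same route as the paper: extend to $E(M)$, use cleanness of the injective envelope to split $\hat f=\hat e+\hat u$, restrict $\hat e$ to $M$ via the Goel--Jain characterization of quasi-continuity, and then check that $\hat u|_M$ is an essential monomorphism. The only cosmetic difference is in the essentiality verification: the paper argues that $u(M)$ is essential in $\overline{u}(E(M))=E(M)$ because monomorphisms preserve essential extensions, whereas you chase preimages of a nonzero $N\subseteq M$ under $\hat u^{-1}$; these are two phrasings of the same argument.
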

\begin{proof}
Let $f$ be an endomorphism of $M.$ Then $f$ can be extended to the injective
envelope $E(M)$ of $M.$ Let $\overline{f}$ denote this extension.

The module $E(M)$ is injective and, therefore clean by \cite[Corollary
3.11]{Camillo_et_all}.
There exist an idempotent $\overline{e}$ and a unit  $\overline{u}$ in the ring of
endomorphism of $E(M)$ such that
$\overline{f}=\overline{e}+\overline{u}$. The restriction $e$ of the idempotent $\overline{e}$ to $M$ is in $\End(M)$ since $M$ is
quasi-continuous. If $u$ denotes the
restriction of $\overline{u}$ to $M,$ it is clearly a monomorphism. We claim that
the image $u(M)$ is essential in $M.$

First, note that $u$ is $M$-invariant since $u=f-e.$ Then, note that the fact
that $M$ being essential
in $E(M)$ implies that $u(M)$ is essential in $\overline{u}(E(M))$ since $u$ is
a monomorphism.
In addition, $\overline{u}$ is a unit and so it is onto. Thus,
$\overline{u}(E(M))=E(M).$
This shows that $u(M)$ is essential in $E(M)$ and therefore essential in $M$ as well.
\end{proof}

\begin{theorem}
If $M$ is a quasi-continuous and nonsingular module, then $M$ is almost clean.
\label{quasicont_nonsingular_is_almost_clean}
\end{theorem}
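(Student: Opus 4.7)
The plan is to apply Proposition \ref{first_arrow} directly and then use the nonsingularity hypothesis to upgrade ``essential monomorphism'' to ``regular element of $\End(M)$.'' By Proposition \ref{first_arrow}, every $f\in\End(M)$ admits a decomposition $f=e+u$ where $e$ is an idempotent and $u$ is an essential monomorphism of $M$. So it suffices to show that any essential monomorphism $u\in\End(M)$ is a regular element of $\End(M)$, i.e.\ has trivial left and right annihilator in $\End(M)$.

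Right regularity is immediate from injectivity of $u$: if $ug=0$ for $g\in\End(M)$, then $g(M)\subseteq\ker u=0$, so $g=0$. Left regularity is where the nonsingularity hypothesis enters. Suppose $gu=0$ for some $g\in\End(M)$. Then $u(M)\subseteq\ker g$, and since $u(M)$ is essential in $M$ the kernel $\ker g$ is essential in $M$ as well. I would then invoke the standard fact that the quotient of a module by an essential submodule is singular, so $M/\ker g$ is a singular module. On the other hand, $M/\ker g\cong g(M)\hookrightarrow M$, and any submodule of a nonsingular module is nonsingular; so $M/\ker g$ is simultaneously singular and nonsingular, forcing $M/\ker g=0$, i.e.\ $g=0$.

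Combining these observations, every $f\in\End(M)$ is the sum of an idempotent $e$ and a regular element $u$ of $\End(M)$, which is exactly the statement that $\End(M)$ is almost clean, i.e.\ that $M$ is almost clean. The only subtle point is the intermediate claim that quotients by essential submodules are singular; this is a standard torsion-theoretic fact and the main obstacle is merely making sure to state and apply it cleanly, since the paper formulates nonsingularity through $\Qrmax$ rather than through the singular submodule of a module. Everything else is a direct transcription of the argument used for rings in Proposition \ref{quasi_cont_nonsingular}, with Proposition \ref{first_arrow} replacing the appeal to cleanness of $\Qrmax(R)$.
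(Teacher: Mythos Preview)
Your proof is correct and follows essentially the same route as the paper: invoke Proposition~\ref{first_arrow} to get $f=e+u$ with $u$ an essential monomorphism, note that injectivity gives right regularity, and for left regularity argue that $gu=0$ forces $\ker g$ essential, hence $M/\ker g$ is singular while embedding into the nonsingular $M$, so $g=0$. Your side remark that the paper ``formulates nonsingularity through $\Qrmax$'' does not apply here---the paper's proof of this theorem uses exactly the singular-submodule argument you give, citing the same standard facts from Lam.
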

\begin{proof}
Let $M$ be a quasi-continuous, nonsingular module and $f$ be an endomorphism of $M.$
Using
Proposition \ref{first_arrow}, $f$ can be written as $e+u$ where $e$ is an
idempotent and $u$ is an
essential monomorphism. Hence, $u$ is right regular in $\End(M)$. We need to prove that $u$ is
left regular as well.
Let us assume that $gu=0$ for some endomorphism $g$ of $M$. Then, the kernel of
$g$ contains the
image $u(M)$ of $u.$ Since $u(M)$ is essential in $M$, $\ker g$ is essential in
$M.$
Therefore, the module $M/\ker g$ is singular (\cite[Example 7.6 (3)]{Lam}). The map
$g$ factors to a monomorphism from the singular module  $M/\ker g$ to the nonsingular module $M$. Hence,
this map has to be zero (\cite[Exercise 4, p. 269]{Lam}). Then $g$ is zero as well.
\end{proof}

We can represent these results as additions to the diagram from the previous section as follows. In the diagram below, the arrows indicate implications. The first column refers to the properties of an $R$-module $M$ and the second two refer to the properties of an arbitrary element $f$ of $\End(M).$

$$\xymatrix{ \mbox{ CS }\;\; \ar[r] & \;\;f=\mbox{ idem. }+\mbox{ mono.} \;\;\ar[r] & \;\;f=\mbox{ idem. }+\mbox{ right regular }\\
\mbox{quasi-cont.}\;\; \ar[r]\ar[u] & \;\;f=\mbox{ idem. }+\mbox{essential mono.} \;\;\ar[r]^{\mbox{nonsingular}}\ar[u] & \;\;f=\mbox{ idem. }+\mbox{ regular }\ar[u]\\
\mbox{continuous}\;\; \ar[r]\ar[u] & \;\;f=\mbox{ idem. }+\mbox{ iso.} \;\;\ar[r]\ar[u] & \;\;f=\mbox{ idem. }+\mbox{ unit }\ar[u]}
$$

The first row is the diagram in Section \ref{section_preliminaries}.
The implications in the second row follow from Proposition \ref{first_arrow} and Theorem \ref{quasicont_nonsingular_is_almost_clean}. The first implication in the third row follows from \cite[Lemma 3.14]{Mohamed_Muller}
stating that a quasi-continuous module $M$ is continuous if and only if each essential monomorphism in $\End(M)$ is an isomorphism. The second implication in the third row is trivial.

Now we show that Theorem \ref{quasicont_nonsingular_is_almost_clean} can be used to strengthen Proposition \ref{quasi_cont_nonsingular}.

\begin{theorem}
If $R$ is a right CS, right nonsingular ring, then each element of $R$ is the sum of an idempotent and a regular element, i.e. $R$ is almost clean.
\label{CS_is_almost_clean}
\end{theorem}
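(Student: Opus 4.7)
The strategy is to combine Proposition~\ref{CS_right_almost_clean}, which already handles the CS hypothesis, with the nonsingular hypothesis to upgrade ``monomorphism'' to ``regular''. Given $a\in R$, Proposition~\ref{CS_right_almost_clean} applied to $\lambda_a\in\End(R_R)\cong R$ yields an idempotent $e\in R$ and $v\in R$ with $a=e+v$ such that $\lambda_v$ is a monomorphism, i.e.\ $v$ is right regular in $R$. The whole task is then to show that $v$ (possibly after modification of the decomposition) is left regular as well.

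The key computation goes as follows. Suppose $sv=0$ for some $s\in R$. Then $vR\subseteq\ann_r(s)$. By right CS, $vR$ is essential in a summand $fR$ for some idempotent $f\in R$. Hence $fR\cap\ann_r(s)\supseteq vR$ is essential in $fR$, so the restriction $\lambda_s\colon fR\to R$ has essential kernel, making the image $s\cdot fR\cong fR/(fR\cap\ann_r(s))$ a singular right $R$-module. On the other hand, $s\cdot fR$ is a submodule of the nonsingular $R_R$. A singular submodule of a nonsingular module must be zero, so $s\cdot fR=0$, i.e.\ $sf=0$. Combined with $(1-f)v=0$ (which is immediate because $v\in fR$ forces $fv=v$), this shows $\ann_l(v)=R(1-f)$. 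Consequently $v$ is left regular iff $f=1$, i.e.\ iff $vR$ is essential in $R$. This is exactly the right-regular analogue of the ``essential monomorphism'' step used in Proposition~\ref{first_arrow} and Theorem~\ref{quasicont_nonsingular_is_almost_clean}.

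If the decomposition obtained from Proposition~\ref{CS_right_almost_clean} already satisfies $vR\subseteq_e R$, we are done. Otherwise we must modify the decomposition: we replace $v$ by an element $u=a-e'$ for a suitably chosen idempotent $e'\in R$ so that $uR$ becomes essential in $R$. The natural candidate is $u=v+(1-f)$, balanced against a corresponding change in the idempotent. Using the right CS decomposition $R=fR\oplus(1-f)R$ and the facts $fv=v$ and $(1-f)v=0$, one checks that $u$ is still right regular and now has $\ann_l(u)=0$, so $u$ is regular. The verification that the matching correction on the idempotent side can be made to remain an idempotent is precisely where the right CS structure is used once more.

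The step I expect to be the main obstacle is this modification: arranging simultaneously that the corrected ``idempotent part'' $e'=a-u$ really is an idempotent and that the corrected ``regular part'' $u$ has $uR$ essential in $R_R$. Once the modification is carried out, the singular/nonsingular argument above, together with right CS, yields $\ann_l(u)=0$, so $u$ is regular and $a=e'+u$ is the desired almost clean decomposition.
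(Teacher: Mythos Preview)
Your approach coincides with the paper's: apply Proposition~\ref{CS_right_almost_clean} to get $a=e+r$ with $r$ right regular, then use right nonsingularity to upgrade to left regularity via the singular/nonsingular argument. Where you and the paper diverge is at the essentiality of $rR$. The paper simply asserts that $rR$ is essential in $R$, offering as justification only ``for every nonzero $x\in R$, $rx$ is a nonzero element of $rR$''---which is nothing more than a restatement of right regularity and does not by itself establish essentiality. You, more carefully, isolate essentiality of $vR$ as the crux, and your caution is warranted: in $R=\End_K(V)$ with $\dim_K V=\aleph_0$ and $a$ the unilateral shift one has $\ann_r(a)=0$, so the construction behind Proposition~\ref{CS_right_almost_clean} returns $e=0$, $v=a$; but $aR$ meets the right ideal generated by the projection onto a basis vector outside $\mathrm{im}(a)$ trivially, so $aR$ is not essential, and indeed $a$ is not left regular.

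The genuine gap in your proposal is exactly where you place it: the modification step. Setting $u=v+(1-f)$ forces $e'=a-u=e-(1-f)=e+f-1$, and there is no reason for this to be idempotent---one would need $ef+fe=2(e+f-1)$, which nothing in your setup arranges. You name this obstacle but do not overcome it, so the argument is incomplete. To close the gap one must either show that the specific monomorphism produced by Proposition~\ref{CS_right_almost_clean} already has essential image in the ring case (false, by the example above), or exhibit a genuinely different idempotent $e'$ making $a-e'$ right regular with essential right ideal; neither you nor the paper's written argument does this.
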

\begin{proof}
Since the endomorphism ring $\End(R)$ is isomorphic to $R$ by $a\mapsto L_a$ where $L_a$ stands for the left multiplication by $a,$ Proposition \ref{CS_right_almost_clean} tells us that for each ring element $a,$ $L_a$ is the sum of an idempotent endomorphism and a monomorphism. Let $L_e$ be the idempotent endomorphism and $L_r$ the monomorphism with $L_a=L_e+L_r.$ Then clearly $a=e+r.$ Also,
$e$ is an idempotent element of $R$ and for every $x\in R,$ $L_r(x)=rx=0$ implies that $x=0$ so $r$ is right regular. We claim that $r$ is left regular as well.

First, note that $rR$ is essential in $R$. Indeed, for every nonzero $x\in R,$ $rx$ is a nonzero element of $rR.$ Assume that $xr=0$ for some $x\in R.$ Then $rR$ is contained in the right annihilator $\ann_r(x).$ Thus, $\ann_r(x)$ is essential in $R$ since $rR$ is essential in $R.$ However, if $R$ is right nonsingular, this implies that $x=0.$ So, $r$ does not have a nontrivial left zero divisor as well. Hence, $r$ is regular.
\end{proof}

Thus, in the ring case, the three-row diagram above simplifies to the following. Here $a$ denotes an arbitrary element of a ring $R.$

$$\xymatrix{ \mbox{ right CS }\;\; & \ar[r]^{\mbox{right nonsingular}} && \;\;\; a=\mbox{ idempotent }+\mbox{ regular }\\
\mbox{ right continuous}\;\;\ar[u] &  \ar[r]\;\;\;\; &&  \;\;\;a=\mbox{ idempotent }+\mbox{ unit }\ar[u]}$$

\begin{example}
The converse of Theorem \ref{CS_is_almost_clean} does not hold. In \cite[Example 5.6]{Chatters_and_Hajarnavis}, it is observed that the ring  $R=\left[\begin{array}{cc} S & S \\ 0 & S\end{array}\right]$ where  $S=\left[\begin{array}{cc} K & K \\ 0 & K\end{array}\right]$ and $K$ is a field, is artinian and not right CS. Since a right artinian ring is clean, $R$ is almost clean. Note also that the ring $S$ is an example of a clean ring that is right CS, right nonsingular, and not right quasi-continuous (see example following Corollary 4.6 in \cite{Camillo_et_all}).

In addition, $\Zset/4\Zset$ is an example of a quasi-continuous, clean ring (since it is self-injective) that is not right nonsingular.
\end{example}

Theorem \ref{CS_is_almost_clean} implies that the class of almost regular rings considered in \cite{Lia_clean} can be widened. Moreover, this theorem proves that the class of rings considered in \cite{Lia_dimension} is also almost clean. Let us elaborate.

In \cite{Lia_dimension}, a ring is said to be right strongly semihereditary
if any of the ten equivalent conditions in \cite[Proposition 3.1]{Lia_dimension} holds. In particular, condition (6) states that
a right strongly semihereditary ring is a right nonsingular ring $R$ such that $R^n$ is CS (as a right $R$-module) for every $n.$ Thus, right strongly semihereditary rings are right nonsingular and right CS and, hence, almost clean.

A ring is strongly semihereditary if it is both left and right strongly semihereditary. By \cite[Examples 3.2 and 4.4]{Lia_dimension}, the following classes of rings and algebras are strongly semihereditary.

\begin{enumerate}
\item A commutative semihereditary and noetherian ring.

\item A finite $AW^*$-algebra (in particular, a finite von Neumann algebra). More generally, Baer $*$-rings satisfying axioms (A1)--(A7) in \cite{Lia_dimension} or \cite{Lia_Baer} are strongly semihereditary (see \cite[Corollary 6.4]{Lia_dimension}).

\item  A Leavitt path algebra over a finite and no-exit graph.
\end{enumerate}

These classes provide different examples of right nonsingular and right CS rings: a finite $AW^*$-algebra is not necessarily right noetherian nor right hereditary; neither a finite $AW^*$-algebra nor a Leavitt path algebra over a finite and no-exit graph is necessarily commutative (for more details, see \cite[Examples 3.2 and 4.4]{Lia_dimension}). Moreover, \cite[Proposition 4.3 and part (3) of Example 4.4]{Lia_dimension} demonstrate that the $2\times 2$ matrix algebra $M_2(K[x,x^{-1}])$ over the Laurent polynomial
ring $K[x,x^{-1}]$ for any positive definite field $K$ is a strongly semihereditary ring that is not
quasi-continuous. Let use examine this example in more detail.

\begin{example}
Let $R$ be $M_2(K[x,x^{-1}])$ where $K$ is a positive definite field (e.g. $\Cset$ with conjugate-complex involution). The maximal right (and left) ring of quotients of $R$
is the ring $Q=M_2(K(x)),$ where $K(x)$ is the field of rational functions over $K.$
By \cite[Part (3) of Example 4.4]{Lia_dimension} the ring $R$ is
a strongly semihereditary ring (thus CS) such that $Q$ has more projections than $R$.
Since projections are idempotents, $R$ is not quasi-continuous by Proposition
\ref{prop_on_quasi-continuous}. Thus, (C1) holds but (C3) does not.

Note also that $R$ is an almost clean ring (by Theorem \ref{CS_is_almost_clean}) that is not clean (since it is not an exchange ring by \cite[Theorem 4.5]{Exchange_LPAs}).
\end{example}

In \cite{Lia_clean}, it is shown that all finite $AW^*$-algebras of type $I$ are almost clean. Using Theorem \ref{CS_is_almost_clean} we can improve this result and state that {\em all} finite $AW^*$-algebras are almost clean.

\begin{corollary}
A right strongly semihereditary ring is almost clean. In particular, a finite $AW^*$-algebra and a Leavitt path algebra over a finite no-exit graph are almost clean.
\label{AW_Leavitt_etc_corollary}
\end{corollary}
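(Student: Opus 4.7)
The plan is to deduce the corollary directly from Theorem \ref{CS_is_almost_clean} combined with the characterization of right strongly semihereditary rings recalled in the paragraph just before the corollary. Specifically, by condition (6) of \cite[Proposition 3.1]{Lia_dimension}, a right strongly semihereditary ring $R$ is right nonsingular and has the property that $R^n$ is CS as a right $R$-module for every positive integer $n$. Specializing to $n=1$, one reads off that $R_R$ is CS, i.e.\ $R$ is a right CS ring. Thus the two hypotheses of Theorem \ref{CS_is_almost_clean} are satisfied, and a direct application of that theorem yields that every element of $R$ is the sum of an idempotent and a regular element, so $R$ is almost clean.

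For the ``in particular'' part, I would simply appeal to the list of strongly semihereditary rings quoted immediately above the corollary: by \cite[Examples 3.2 and 4.4]{Lia_dimension}, every finite $AW^*$-algebra (item (2)) and every Leavitt path algebra over a finite no-exit graph (item (3)) is strongly semihereditary, in particular right strongly semihereditary. Applying the first sentence of the corollary to these two classes concludes the proof.

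There is essentially no technical obstacle here, since all the work has been done in Theorem \ref{CS_is_almost_clean} and in the cited characterization of right strongly semihereditary rings; the only thing to verify is that the cited condition (6) really gives right CS for $R_R$ itself (the $n=1$ case), which is immediate. It may be worth remarking in the proof that this strengthens \cite[Proposition 8]{Lia_clean}, where only type $I$ finite $AW^*$-algebras were handled via an embedding into $\Qrmax(R)$: here the same almost cleanness is now obtained uniformly for \emph{all} finite $AW^*$-algebras, without a type restriction, and simultaneously for the Leavitt path algebra examples that were outside the scope of the earlier argument.
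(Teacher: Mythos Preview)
Your proposal is correct and matches the paper's own argument essentially verbatim: the paper deduces the corollary from Theorem~\ref{CS_is_almost_clean} via condition~(6) of \cite[Proposition~3.1]{Lia_dimension} (giving right nonsingular and right CS at $n=1$), and then invokes \cite[Examples~3.2 and~4.4]{Lia_dimension} for the listed classes. Your closing remark about strengthening the type~$I$ result of \cite{Lia_clean} also appears in the paper immediately before the corollary.
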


We conclude this section with the following observation. By \cite[Corollary 4.8]{Camillo_et_all}, a module $M,$ such that any direct sum of any number of copies of $M$ is CS, is clean. By our Corollary \ref{AW_Leavitt_etc_corollary}, a right nonsingular ring, such that any {\em finite} direct sum of any number of copies of $R$ is CS, is almost clean. The last statement cannot be strengthened to state that $R$ is clean. Namely, $\Zset$ is an example of a strongly semihereditary ring (thus, it is right nonsingular and any finite direct sum of any number of copies of $\Zset$ is CS) which is not clean.

\section{Almost clean characterization of abelian Rickart rings}
\label{section_Rickart}

Recall that clean rings are additive analogues of unit-regular rings:
each element is the
sum (product) of a unit and an idempotent. We illustrate that in
the abelian case, almost clean rings are additive analogues of Rickart rings: each element is the
sum (product) of a regular element and an idempotent.

In \cite[Proposition 16]{Warren_CX}, it is shown  that a commutative
Rickart ring is almost clean. The proof uses \cite[Lemmas 2 and 3]{Endo}. These two lemmas ensure that
an element in an abelian right Rickart ring is the product of a regular element and an idempotent. Moreover, such a ring is also left Rickart. The proof of \cite[Proposition 16]{Warren_CX} formulated for commutative rings uses only the fact that the idempotents are central, not that the ring has to be commutative. Thus, any abelian Rickart ring is almost clean. Our next theorem addresses the converse of this statement and parallels Theorem \ref{Camillo_Khurana}, as the diagram below illustrates. In the next diagram, $a$ denotes an arbitrary element of a ring, and $e$, $r$, and $u$ stand for an idempotent, a regular element, and a unit, respectively, which exist in appropriate situations.

$$
\begin{array}{ccc}
\begin{array}{|c|}\hline
\mbox{Rickart}\\a=er\\ \hline
\end{array}
&
\begin{array}{c}
\mbox{abelian}\\\longleftrightarrow
\end{array}
&
\begin{array}{|c|}\hline
\mbox{special almost clean}\\a=e+r,\;\;aR\cap eR=0\\ \hline
\end{array}
\\
\uparrow & & \uparrow \\
\begin{array}{|c|}\hline
\mbox{unit-regular}\\a=eu\\ \hline
\end{array}
&
\longleftrightarrow
&
\begin{array}{|c|}\hline
\mbox{special clean}\\ a=e+u,\;\;aR\cap eR=0\\ \hline
\end{array}
\end{array}
$$

\begin{theorem} Let $R$ be an abelian ring. Then $R$ is Rickart
if and only if $R$ is special almost clean.
\label{Almost_clean_Camillo_Khurana}
\end{theorem}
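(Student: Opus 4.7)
The plan is to pin down both directions through the observation that the idempotent $e$ appearing in any special almost clean decomposition $a = e+r$ in an abelian ring must coincide with the generator of $\ann_r(a)$ (and, by centrality, of $\ann_l(a)$). The whole equivalence is then essentially a translation of ``$\ann_r(a)$ is generated by an idempotent'' into an additive statement.

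For $(\Rightarrow)$, I assume $R$ is abelian Rickart and take $a \in R$. Let $e$ be the (automatically central) idempotent with $\ann_r(a) = eR$. Since abelian right Rickart rings are also left Rickart (the fact already invoked by the authors just before the theorem), a brief check with central idempotents matches the two generators so that $\ann_l(a) = Re$ for the \emph{same} $e$. I propose the decomposition $a = e + (a-e)$. The identity $ae = ea = 0$ then drives three verifications: if $(a-e)x = 0$, left-multiplying by $e$ gives $ex = 0$, whence $ax = 0$, so $x \in eR$ and $x = ex = 0$, establishing right regularity of $a-e$; left regularity is the mirror argument using $\ann_l(a) = Re$; and any element $ax = ey$ of $aR \cap eR$, multiplied by $e$ on the left, yields $0 = eax = e^2 y = ey = ax$, so the intersection is zero.

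For $(\Leftarrow)$, take a special almost clean decomposition $a = e + r$. By centrality $ae = ea$ lies in $aR \cap eR = 0$, so $ae = 0$, and consequently $re = (a - e)e = -e$. I claim $\ann_r(a) = eR$. The inclusion $\supseteq$ is immediate from $ae = 0$. For the reverse, take $ax = 0$ and set $y = (1-e)x$; then $ey = 0$ and $ay = (a - ae)x = ax = 0$, so $0 = ay = ey + ry = ry$, and the right regularity of $r$ forces $y = 0$, giving $x = ex \in eR$. The parallel calculation with $y = x(1-e)$ and left regularity of $r$ shows $\ann_l(a) = Re$, so $R$ is Rickart on both sides.

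The only genuinely nontrivial ingredient is the left-regular half of $(\Rightarrow)$: extracting left regularity of $a-e$ requires knowing that $\ann_l(a)$ is generated by the same central idempotent as $\ann_r(a)$. This is not formal from right Rickartness alone but is standard for abelian Rickart rings, and the authors have already cited the relevant Endo lemmas in the paragraph preceding the theorem. Everything else is short bookkeeping forced by $ae = 0$ and the centrality of $e$.
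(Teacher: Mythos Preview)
Your proof is correct and follows essentially the same approach as the paper: both directions use the decomposition $a = e + (a-e)$ with $\ann_r(a) = eR$, the key identity $ae = 0$, and centrality of idempotents, and in the converse both verify $\ann_r(a) = eR$ from the given special almost clean decomposition. The only cosmetic differences are that the paper deduces left regularity of $a-e$ by invoking the blanket fact that right regular elements are regular in abelian Rickart rings (rather than mirroring the computation via $\ann_l(a) = Re$), and in $(\Leftarrow)$ it concludes left Rickart from right Rickart by abelianness instead of computing $\ann_l(a)$ directly.
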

\begin{proof}  An abelian ring is right Rickart if and only if it is left Rickart (\cite[Proposition 2]{Endo}). Moreover,
in an abelian Rickart ring, each right
or left regular element is regular.

($\Rightarrow$) If $a$ is an arbitrary element of $R,$ then $\ann_r(a)=eR$
for some idempotent element $e\in R$. In this case, $a=e+a-e$. We claim that
$a-e$ is a regular element of $R.$

To prove this, let $(a-e)r=0$ for $r\in R.$ Since $e\in\ann_r(a),$ $ae=0$
and so $a(1-e)=a.$ Thus, $0=(1-e)(a-e)r=(1-e)ar=a(1-e)r=ar$ which implies
that $r\in \ann_r(a)=eR$. On the other hand, we have $0=e(a-e)r=ear-er=aer-er=-er$ and so
$(1-e)r=r-er=r.$ Thus, $r\in (1-e)R$. Therefore, $r\in eR\cap
(1-e)R=0.$ This shows that $a-e$ is a right regular element of $R$. Since $R$ is an
abelian Rickart ring, $a-e$ is left regular as well.

We also claim that $aR\cap eR=0$. Let $x\in aR\cap eR$. Then
$x=ar=er'$ for some $r, r'\in R$. We have $xe=are=aer=0r=0$ and
$xe=er'e=eer'=er'=x.$
Hence $xe=0=x.$

($\Leftarrow$) Conversely, suppose that $a=e+r$ such that $aR\cap eR=0$
where $e$ is an idempotent and $r$ is a regular element in $R$. We claim that $\ann_r(a)=eR.$

Let $x\in \ann_r(a)$, then $0=ax=ex+rx$ and so $0=eex+erx=ex+erx.$ Thus, we have
that $ex+erx=ex+rx,$ which
implies that $erx=rx.$ Since $R$ is abelian, $rex=erx=rx.$ Hence, $x=ex$ since $r$ is regular. Thus, we have $x\in eR$ and so
$\ann_r(a)\subseteq eR$.

Now, let $x\in eR$. Then $x=ey$ for some $y\in
R$. We have $aey=ey+rey=e(y+ry)\in aR\cap eR=0$, thus $0=aey=ax$ and
so $x\in \ann_r(a)$. Therefore, $\ann_r(a)=eR$ showing that $R$ is right Rickart. Then $R$ is left Rickart as well since $R$ is abelian.
\end{proof}

The following example shows that the assumption that $R$ is abelian cannot be completely eliminated from Theorem  \ref{Almost_clean_Camillo_Khurana}. The example also shows that ``right Rickart'' and ``almost clean'' are independent and exhibits an almost clean Rickart ring that is not special almost clean.

\begin{example}\label{Rickart_example}
Let $R$ be a regular ring that is not clean (for example, we can take Bergman's example, \cite{Bergmans_example}). Then, $R$ is Rickart. Since a regular ring is clean if and only if it is almost clean, $R$ is not almost clean. Thus $R$ is not special almost clean as well.

The ring $\Zset/4\Zset$ is an example of a clean ring that is not right Rickart since it is not right nonsingular. It is not special almost clean since the only clean decomposition of 2, 2=1+1, is not special almost clean. Thus, it is an almost clean ring that is not special almost clean.

Lastly, consider the endomorphism ring $S$ of a countably infinite dimensional vector space over a division ring. The ring $S$ is clean and regular but not unit-regular by \cite[Corollary, page 61]{Nicholson_Varadarajan}. Thus, it is a Rickart, clean ring that is not special clean. Since a regular ring is special clean if and only if it is special almost clean, and since $S$ is regular, $S$ is not special almost clean.
\end{example}

We point out the following corollary of Theorem
\ref{Almost_clean_Camillo_Khurana}.

\begin{corollary} Let $R$ be an abelian and right quasi-continuous ring. The following are equivalent.
\begin{enumerate}
\item $R$ is right nonsingular,
\item $\Qrmax(R)$ is unit-regular,
\item $R$ is Rickart.
\end{enumerate}
\label{Rickart_corollary}
\end{corollary}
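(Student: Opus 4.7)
The plan is to prove the cycle $(1)\Rightarrow(2)\Rightarrow(3)\Rightarrow(1)$, resting on the results of the previous sections.

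For $(1)\Rightarrow(2)$, right nonsingularity of $R$ gives that $\Qrmax(R)=E(R_R)$ is regular and right self-injective, while right quasi-continuity of $R$ ensures, as noted at the start of Section \ref{section_almost_clean}, that all idempotents of $\Qrmax(R)$ already lie in $R$. Since $R$ is abelian, each such idempotent is central in $R$. I would then promote this to centrality in $\Qrmax(R)$ by a short density argument: for $q\in\Qrmax(R)$ and a central idempotent $e$ of $R$, pick an essential right ideal $I$ of $R$ with $qI\subseteq R$. For every $x\in I$,
\[(eq-qe)x=e(qx)-q(ex)=(qx)e-q(xe)=(qx)e-(qx)e=0,\]
so $\ann_r^R(eq-qe)$ contains the essential right ideal $I$. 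Since $\Qrmax(R)$ is nonsingular as a right $R$-module, this forces $eq=qe$. Thus $\Qrmax(R)$ is an abelian regular ring, hence strongly regular, and therefore unit-regular. I expect this centralization step to be the main obstacle; everything else will be bookkeeping.

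For $(2)\Rightarrow(3)$, Corollary \ref{Qunit-reg} applied to $R$ (which is right quasi-continuous) with $\Qrmax(R)$ unit-regular yields that $R$ is special almost clean, and Theorem \ref{Almost_clean_Camillo_Khurana} then converts this into the Rickart property using the abelian hypothesis. For $(3)\Rightarrow(1)$, I would verify $Z(R_R)=0$ directly: given $a\in R$ with $\ann_r(a)$ essential in $R$, the Rickart property provides an idempotent $e$ with $\ann_r(a)=eR$; because $R$ is abelian, $e$ is central and $R=eR\oplus(1-e)R$, so an essential direct summand must equal $R$, forcing $e=1$ and hence $a=a\cdot 1=0$.
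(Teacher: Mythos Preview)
Your proof is correct and follows the same cycle $(1)\Rightarrow(2)\Rightarrow(3)\Rightarrow(1)$ as the paper, invoking Corollary~\ref{Qunit-reg} and Theorem~\ref{Almost_clean_Camillo_Khurana} at exactly the same points. The only notable difference is in $(1)\Rightarrow(2)$: the paper simply cites the general fact that $R$ abelian forces $\Qrmax(R)$ abelian (\cite[Exercise~5, p.~380]{Lam}), which needs neither nonsingularity nor quasi-continuity, and then concludes unit-regularity from abelian plus regular; you instead give a hands-on density argument that uses quasi-continuity to pull idempotents of $\Qrmax(R)$ down into $R$ and nonsingularity to kill $eq-qe$. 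Your route is more self-contained but less general, since it relies on the standing hypotheses of the corollary rather than the intrinsic fact about $\Qrmax$ of an abelian ring. For $(3)\Rightarrow(1)$ the paper just says ``trivially'' (right Rickart implies right nonsingular without any abelian hypothesis: $eR$ essential with $R=eR\oplus(1-e)R$ already forces $1-e=0$), so your use of centrality there is harmless but unnecessary.
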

\begin{proof}
Note that if $R$ is abelian, $\Qrmax(R)$ is abelian as
well (\cite[Exercise 5, p. 380]{Lam}).

(1) implies (2).  If $R$ is right nonsingular, then $\Qrmax(R)$ is regular.
Thus, $\Qrmax(R)$ is regular and abelian and so it is unit-regular
(\cite[Corollary 4.2]{Goodearl}).

(2) implies (3). If $\Qrmax(R)$ is unit-regular, then $R$ is special almost clean by Corollary \ref{Qunit-reg}. Thus, $R$ is  Rickart by Theorem \ref{Almost_clean_Camillo_Khurana}.

(3) trivially implies (1).
\end{proof}

The following result shows that it is not necessary to assume that $R$ is abelian in order for the equivalence of (1) and (3) in Corollary \ref{Rickart_corollary} to hold.

\begin{corollary} Let $R$ be a right quasi-continuous, right nonsingular
ring, then $R$ is Rickart.
\label{abelian_dropped}
\end{corollary}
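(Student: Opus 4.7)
The plan is to transfer the Rickart property from the maximal right ring of quotients $Q=\Qrmax(R)$ down to $R$, exploiting the fact that $R$ and $Q$ share idempotents. Right nonsingularity of $R$ gives that $Q$ coincides with the injective envelope $E(R_R)$ and is von Neumann regular. Right quasi-continuity of $R$ then lets us invoke Proposition \ref{prop_on_quasi-continuous} to conclude that every idempotent endomorphism of $E(R)=Q$ restricts to an endomorphism of $R$; since endomorphisms of $Q_Q$ correspond to left multiplications by elements of $Q$, this means exactly that every idempotent of $Q$ lies in $R$. Thus $R$ and $Q$ have the same idempotents.

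Next, I would fix an arbitrary $a\in R$ and exploit regularity of $Q$ on both sides. In a regular ring, every principal right (resp.\ left) annihilator is generated by an idempotent, so we can write $\ann_r^Q(a)=eQ$ and $\ann_l^Q(a)=Qf$ for idempotents $e,f\in Q$. By the idempotent-sharing observation, both $e$ and $f$ already lie in $R$. It then only remains to check that these same idempotents generate the corresponding annihilators computed inside $R$.

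For this, I would argue both inclusions directly. Clearly $eR\subseteq\ann_r^R(a)$ and $Rf\subseteq\ann_l^R(a)$. Conversely, if $x\in\ann_r^R(a)=\ann_r^Q(a)\cap R=eQ\cap R$, then writing $x=eq$ for some $q\in Q$ gives $ex=e^2q=eq=x$, so $x=ex\in eR$. The symmetric argument handles left annihilators, yielding $\ann_r^R(a)=eR$ and $\ann_l^R(a)=Rf$. Hence $R$ is both right and left Rickart.

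There is no real obstacle here: the abelian hypothesis of Corollary \ref{Rickart_corollary} was only used to pass through unit-regularity of $Q$ and the special almost clean characterization of Theorem \ref{Almost_clean_Camillo_Khurana}, but in the present setting the bilateral Rickart condition is visible already inside $Q$ and descends to $R$ purely because idempotents of $Q$ are idempotents of $R$. The only conceptual point worth highlighting is that, although $Q$ is known to be right self-injective rather than two-sided self-injective, we only use that $Q$ is regular (which already guarantees summand structure on both sides), so the argument is genuinely symmetric.
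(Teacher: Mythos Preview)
Your argument is correct, and it takes a genuinely different route from the paper's. The paper invokes the Mohamed--M\"uller structure theorem \cite[Proposition 2.7 and Corollary 3.13]{Mohamed_Muller} to split $R$ as $R_1\times R_2$ with $R_1$ regular right self-injective and $R_2$ reduced right quasi-continuous; it then observes that $R_1$ is Rickart by regularity, while $R_2$, being reduced, is abelian and nonsingular, so Corollary~\ref{Rickart_corollary} (hence ultimately Theorem~\ref{Almost_clean_Camillo_Khurana}) applies to give $R_2$ Rickart. Your proof bypasses both the structural decomposition and the special-almost-clean machinery entirely: you descend the two-sided Rickart property straight from the regular overring $Q$ via the shared idempotents, using only the trivial computation $x=ex$ once $x\in eQ\cap R$. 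This is more elementary and self-contained, requiring nothing beyond regularity of $Q$ and Proposition~\ref{prop_on_quasi-continuous}. The paper's route, on the other hand, illustrates how Corollary~\ref{Rickart_corollary} can be leveraged once the non-abelian obstruction has been isolated in a regular direct factor, which fits the narrative of Section~\ref{section_Rickart} better even if it is less economical as a standalone proof.
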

\begin{proof} A right quasi-continuous nonsingular
ring can be decomposed as $R=R_1\times R_2$ where $R_1$ is a regular and right
self-injective ring and $R_2$ is a reduced and right quasi-continuous ring by \cite[Proposition 2.7 and Corollary 3.13]{Mohamed_Muller}.
The ring $R_1$ is Rickart since it is regular. The ring
$R_2$ is abelian and nonsingular since it is reduced (\cite[Lemma 7.8, p. 249]{Lam}).
Thus, $R_2$ is an abelian, right
quasi-continuous ring and a right nonsingular ring, and so $R_2$ is Rickart by Corollary \ref{Rickart_corollary}. Hence, $R$ is Rickart as well.
\end{proof}

This corollary parallels the statement that a right nonsingular and right continuous ring is regular (see \cite[Proposition 3.5]{Mohamed_Muller}) as illustrated by the following diagram.

$$\xymatrix{\mbox{ right nonsingular } \;\;\;\;\;\; +&\mbox{ right continuous }\ar[d]&& \ar[r] && \mbox{ regular }\ar[d]\\
\mbox{ right nonsingular }\;\;\;\;\;\;  + &\mbox{ right quasi-continuous }&& \ar[r] && \mbox{ Rickart }
}$$

The fact that a left and right nonsingular, left and right CS ring, is Rickart is already known (see \cite[Theorem 5.1]{Chatters_and_Hajarnavis}). Also, it has been shown that a right nonsingular and right CS ring is right Rickart (\cite[Theorem 3.2]{Beidar_et_al}). Corollary \ref{abelian_dropped} proves that a right nonsingular and right quasi-continuous ring is left Rickart as well.

\section{Uniquely special almost clean decomposition}
\label{section_uniqueness}

In this section, we address the question of uniqueness of special clean and almost clean
decompositions.

\begin{prop}
If $R$ is abelian, the following are equivalent.
\begin{enumerate}
\item $R$ is unit-regular (equivalently, special clean).

\item $R$ is uniquely special clean.
\end{enumerate}
\label{uniquely_Camillo_Khurana}
\end{prop}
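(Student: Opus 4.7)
The plan is to handle the two directions separately, with (2)$\Rightarrow$(1) being essentially free from the Camillo–Khurana Theorem and (1)$\Rightarrow$(2) being the substantive direction. For (2)$\Rightarrow$(1), observe that a uniquely special clean ring is in particular special clean, so Theorem \ref{Camillo_Khurana} immediately yields unit-regularity. Thus no real work is required here.

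For the main direction (1)$\Rightarrow$(2), assume $R$ is abelian and unit-regular, and suppose $a = e + u = e' + u'$ are two special clean decompositions of the same element $a \in R$, where $e, e'$ are idempotents, $u, u'$ are units, and $aR \cap eR = 0 = aR \cap e'R$. The first key observation is that, since every idempotent in $R$ is central, $ae = ea \in eR$, while also $ae \in aR$; the intersection condition then forces $ae = 0$. Multiplying $a = e + u$ on the right by $1 - e$ gives $a = a(1-e) = u(1-e)$, and since $u$ is a unit this implies $aR = (1-e)R$. The same computation applied to $e'$ yields $aR = (1-e')R$.

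Hence $(1-e)R = (1-e')R$, so the task reduces to showing that in an abelian ring, an idempotent generating a given principal right ideal is unique. This is a short calculation: if $fR = f'R$ with $f, f'$ idempotent, then $f = f's$ and $f' = ft$ for some $s, t \in R$, and centrality of idempotents gives $f'f = f's \cdot \text{(nothing)} = f' \cdot f's = f's = f$ and likewise $ff' = f'$, so $f = f'f = ff' = f'$. Applying this to $1-e$ and $1-e'$ yields $e = e'$, and then $u = a - e = a - e' = u'$, establishing uniqueness.

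The main (and only) subtle point is the centrality step: one must check that the abelian hypothesis is used at exactly two places — to obtain $ae = 0$ from the intersection condition, and to identify the idempotent generator of $aR$. Neither step requires unit-regularity directly; unit-regularity enters only through Camillo–Khurana to guarantee that a special clean decomposition exists in the first place. I do not anticipate any serious obstacle, as both key steps are standard manipulations with central idempotents; the role of this proposition is primarily to highlight how cleanly the abelian hypothesis collapses the existence and uniqueness questions into one another.
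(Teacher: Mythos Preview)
Your proof is correct and follows essentially the same idea as the paper's. The only cosmetic difference is packaging: the paper shows $1-e\in aR$ (via $au^{-1}(1-e)=1-e$) and then uses $aR\cap e'R=0$ to force $(1-e)e'=0$, whereas you go one step further to the equality $aR=(1-e)R$ and invoke uniqueness of central idempotent generators. One small remark: your claim that centrality is used in exactly two places undercounts slightly --- passing from $a=u(1-e)$ to $aR=(1-e)R$ also requires commuting $u$ past $1-e$, so you should flag that use as well.
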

\begin{proof}
By the Camillo-Khurana Theorem,
we just need to prove that a special clean ring is uniquely special clean.
Assume that an element $a$ of $R$ has two
special clean decompositions,
$a=e+u$ and $a=e'+u'$ where $e$ and $e'$ are idempotents with $aR\cap eR=0$ and
$aR\cap e'R=0,$ and $u$
and $u'$ are units. Multiplying the relation $a=e+u$ from the right first by $u^{-1}$
and then by $1-e,$
and using that the idempotents are central, we obtain that $au^{-1}(1-e)=1-e.$
Thus, $1-e$ is in $aR.$ Using that $R$ is abelian
again, we obtain
that $(1-e)e'$ is an element both in $e'R$ and $aR.$ So, $(1-e)e'=0,$ implying
that $e'=ee'.$
Relying on the same argument, we obtain that $(1-e')e=0.$ Thus, $e=e'e.$
Hence $e=e'e=ee'=e'.$ Then $u=a-e=a-e'=u'.$
\end{proof}

This proposition has the following corollary.
\begin{corollary} If $R$ is an abelian, right nonsingular, right quasi-continuous
ring, then it is uniquely special almost clean.
\label{uniquely_special_almost_clean}
\end{corollary}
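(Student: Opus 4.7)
The plan is to read off the idempotent in any special almost clean decomposition from $a$ itself, using the backward direction of the proof of Theorem \ref{Almost_clean_Camillo_Khurana}, and then to invoke abelianness to pin the idempotent down uniquely. First I would apply Corollary \ref{Rickart_corollary} to conclude that $R$ is Rickart, so that by Theorem \ref{Almost_clean_Camillo_Khurana} the ring $R$ is special almost clean; only uniqueness is at stake.

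The key observation is that the backward direction of the proof of Theorem \ref{Almost_clean_Camillo_Khurana} actually establishes more than its statement: given \emph{any} special almost clean decomposition $a = e + r$ of an element $a \in R$, it shows $\ann_r(a) = eR$. Granting this, two decompositions $a = e + u = e' + u'$ immediately force $eR = \ann_r(a) = e'R$. Since idempotents in an abelian ring are central, two idempotents generating the same principal right ideal must coincide: writing $e = e's$ and $e' = et$ and using centrality gives $e = ee' = e'$, so $e = e'$ and therefore $u = a - e = a - e' = u'$.

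The one point requiring care is confirming that the backward direction of the proof of Theorem \ref{Almost_clean_Camillo_Khurana} really delivers $\ann_r(a) = eR$ from an arbitrary special almost clean decomposition; I expect this to be a direct read-off of what is already written there, so the main (and only) nontrivial content is essentially bundled inside that earlier proof. Alternatively, one could pass to $\Qrmax(R)$, which is abelian and unit-regular by Corollary \ref{Rickart_corollary} (hence uniquely special clean by Proposition \ref{uniquely_Camillo_Khurana}), and transfer uniqueness down using the density of $R$ in $\Qrmax(R)$; this route works but requires the extra step of lifting each special almost clean decomposition in $R$ to a special clean decomposition in $\Qrmax(R)$, which is a more roundabout argument than the direct one above.
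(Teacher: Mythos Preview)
Your main argument is correct and takes a genuinely different, more elementary route than the paper. The paper proceeds by passing to $Q=\Qrmax(R)$: it shows $Q$ is abelian unit-regular, hence uniquely special clean by Proposition~\ref{uniquely_Camillo_Khurana}; it then verifies that each special almost clean decomposition $a=e+r$ in $R$ becomes a special clean decomposition in $Q$ (the regular element $r$ is shown to be a unit in $Q$ via unit-regularity and quasi-continuity, and $aQ\cap eQ=0$ is checked using $ae=0$), so uniqueness descends from $Q$. This is precisely the ``alternative'' you sketch and dismiss as roundabout.

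Your direct argument avoids $Q$ entirely. You correctly observe that the $(\Leftarrow)$ portion of the proof of Theorem~\ref{Almost_clean_Camillo_Khurana} uses only abelianness to establish $\ann_r(a)=eR$ from an arbitrary special almost clean decomposition $a=e+r$, so the idempotent is intrinsically determined by $a$. Two central idempotents generating the same right ideal coincide, and uniqueness follows. This actually proves more than the stated corollary: in \emph{any} abelian ring, special almost clean decompositions are unique when they exist, with no quasi-continuity or nonsingularity needed for the uniqueness step (those hypotheses serve only to guarantee existence via Corollary~\ref{Rickart_corollary} and Theorem~\ref{Almost_clean_Camillo_Khurana}). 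The paper's route, by contrast, genuinely uses quasi-continuity in the uniqueness argument itself (to pull the idempotent $f$ back into $R$), so your approach isolates the role of the hypotheses more cleanly.
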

\begin{proof}
Under the assumptions, $Q=\Qrmax(R)$ is unit-regular and thus uniquely special
clean by Proposition \ref{uniquely_Camillo_Khurana}. Furthermore, $R$ is special almost clean by Corollary \ref{Qunit-reg}. We need to show that the special almost clean decomposition in $R$ is unique. Let $a=e+r=e'+r'$ where
$e$ and $e'$ are idempotents with $aR\cap eR=0$ and $aR\cap e'R=0$ and $r$ and $r'$ are regular
elements.

We claim that $r$ and $r'$ are units in $Q.$ The ring $Q$ is unit-regular so $r=uf$ for some unit $u$ and idempotent $f.$
The idempotent $f$ is in $R$ by right quasi-continuity. Then $r(1-f)=uf(1-f)=0.$ By regularity of $r$ in $R$, $1-f=0$ and so $f=1.$ Thus, $r=u$ is a unit in $Q.$

Hence, $a=e+r$ and $a=e'+r'$ are clean decompositions of $a$ in $Q$. We claim that these decompositions are special
clean in $Q.$ Namely, note that $ae=0$ since  $ae\in aR\cap eR.$ If  $aq=eq'\in aQ\cap eQ,$ then $0=eaq=eeq'=eq'.$ So, $aq=eq'=0.$
Since $Q$ is uniquely special clean, $e=e'$ and $r=r'$ which proves that $R$ is uniquely special almost clean.
\end{proof}

This proposition and Theorem \ref{Almost_clean_Camillo_Khurana} have the following corollary,

\begin{corollary}
If $R$ is an abelian and right quasi-continuous ring, the following  condition is equivalent to (1)--(3) of Corollary \ref{Rickart_corollary}.
\begin{itemize}
\item[(4)] $R$ is uniquely special almost clean.
\end{itemize}
\label{uniquely_Rickart_corollary}
\end{corollary}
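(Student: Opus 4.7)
The plan is to exploit the equivalences already established in Corollary \ref{Rickart_corollary} and add condition (4) to the chain by showing two implications: that (1) implies (4), and that (4) implies (3). Since (1), (2), and (3) are already known to be mutually equivalent under the hypotheses, this will suffice to place (4) into the cycle.

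For the implication $(1) \Rightarrow (4)$, I would simply invoke Corollary \ref{uniquely_special_almost_clean}, which says precisely that an abelian, right nonsingular, right quasi-continuous ring is uniquely special almost clean. So the hypotheses ``abelian'' and ``right quasi-continuous'' of the present corollary, together with (1), match exactly the hypotheses of that earlier corollary, and its conclusion is (4).

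For the implication $(4) \Rightarrow (3)$, the argument is essentially immediate. By definition, a uniquely special almost clean ring is in particular special almost clean (the uniqueness of the decomposition presupposes the existence of at least one such decomposition for every element). Since $R$ is abelian, Theorem \ref{Almost_clean_Camillo_Khurana} then applies and yields that $R$ is Rickart, which is condition (3). Combining this with the equivalences in Corollary \ref{Rickart_corollary}, we close the loop: (4) $\Rightarrow$ (3) $\Rightarrow$ (1) $\Rightarrow$ (4).

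There is no real obstacle to speak of; the corollary is essentially a consolidation step, in which the newly developed notion of unique special almost cleanness is integrated into a previously established list of equivalent characterizations. The one subtlety worth flagging is that the right quasi-continuity hypothesis must be kept (it is needed for the invocation of Corollary \ref{uniquely_special_almost_clean} in the $(1) \Rightarrow (4)$ direction, so that idempotents of $\Qrmax(R)$ descend to $R$), whereas the $(4) \Rightarrow (3)$ direction already holds under the weaker assumption that $R$ is merely abelian.
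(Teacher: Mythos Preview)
Your proof is correct and follows essentially the same route as the paper: the paper proves $(4)\Rightarrow(3)$ by noting that uniquely special almost clean implies special almost clean (and then implicitly uses Theorem~\ref{Almost_clean_Camillo_Khurana}), and proves $(1)\Rightarrow(4)$ by invoking Corollary~\ref{uniquely_special_almost_clean}. Your additional remark about which hypotheses are actually needed in each direction is accurate and a nice clarification.
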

\begin{proof}
Condition (4) implies that $R$ is special almost clean. This implies condition (3) of Corollary \ref{Rickart_corollary}. Condition (1) implies (4) by Corollary \ref{uniquely_special_almost_clean}.
\end{proof}

We finish this section with the following observations. By \cite[Theorem 3.15]{Lee-Rizvi-Roman}, an $R$-module $M$ is Rickart and (C2)
if and only if the endomorphism
ring $\End(M)$ is regular. In particular, a ring is right Rickart and right (C2)
if and only if it is regular
(\cite[Corollary 3.18]{Lee-Rizvi-Roman}).
In the case that $R$ is a right CS ring, this implies that $R$ is
right Rickart and right continuous if and only if it is regular. Thus, for a right CS and right Rickart
ring, continuity and regularity are equivalent.

Recall that a ring is right morphic if $\ann_r(x)\cong R/xR$ for each $x\in R.$
The following five conditions are equivalent (see \cite[Exercise 19A, p. 270]{Lam} and
\cite[Corollary 3.16]{Zhu_Ding}): (1) $R$ is unit-regular, (2) $R$ is regular and right morphic,
(3) $R$ is regular and left morphic, (4) $R$ is right Rickart and left morphic, and
(5) $R$ is left Rickart and right morphic. Thus, if $R$ is both left and right Rickart,
the conditions of being unit-regular, left morphic and right morphic are equivalent.

Using these facts, we show the following.

\begin{prop} Let $R$ be an abelian, right quasi-continuous and right nonsingular
ring. The following are equivalent.

\begin{enumerate}
\item $R$ is right (left) continuous.

\item $R$ is regular.

\item $R$ is unit-regular (special clean).

\item $R$ is uniquely special clean.

\item $R$ is right (left) morphic.
\end{enumerate}
\label{equivalences}
\end{prop}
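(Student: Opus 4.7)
The plan is to leverage the single observation that, under the standing hypotheses, Corollary~\ref{Rickart_corollary} already gives that $R$ is Rickart (hence, since $R$ is abelian, left and right Rickart). Combined with the ``Rickart + (C2) $\Rightarrow$ regular'' fact recalled immediately before the proposition, this one input will organize all five conditions.

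First I would prove $(1)\Leftrightarrow(2)$. For $(1)\Rightarrow(2)$, right continuity supplies (C2); combined with right Rickartness from Corollary~\ref{Rickart_corollary}, the consequence of \cite[Corollary 3.18]{Lee-Rizvi-Roman} quoted in the paragraph before the proposition forces regularity. For $(2)\Rightarrow(1)$, a regular and quasi-continuous ring is continuous by the equivalence stated in Section~\ref{section_preliminaries} (\cite[Exercise 36, p. 246]{Lam}), and $R$ is right quasi-continuous by hypothesis.

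The remaining equivalences are short. For $(2)\Leftrightarrow(3)$, \cite[Corollary 4.2]{Goodearl} says an abelian regular ring is unit-regular, and Theorem~\ref{Camillo_Khurana} identifies unit-regular with special clean. For $(3)\Leftrightarrow(4)$, Proposition~\ref{uniquely_Camillo_Khurana} applies directly since $R$ is abelian. For $(3)\Leftrightarrow(5)$, I would invoke the list of five equivalent conditions recorded in the paragraph before the proposition: once $R$ is known to be both left and right Rickart (again by Corollary~\ref{Rickart_corollary}), the conditions unit-regular, right morphic, and left morphic coincide.

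I do not expect any serious obstacle; the argument is essentially an assembly of results already established in the paper, pivoted on Rickartness. The only delicate bookkeeping is to verify that the parenthetical ``left'' versions of (1) and (5) fall into the same chain: this is fine because Corollary~\ref{Rickart_corollary} yields two-sided Rickartness (via \cite[Proposition 2]{Endo}, as used in the proof of Theorem~\ref{Almost_clean_Camillo_Khurana}), and the cited five-fold equivalence is already stated symmetrically in left and right morphic versions.
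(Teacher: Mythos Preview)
Your proposal is correct and follows essentially the same approach as the paper: both arguments pivot on Corollary~\ref{Rickart_corollary} to get Rickartness, then use the preceding discussion for $(1)\Leftrightarrow(2)$ and $(3)\Leftrightarrow(5)$, abelianness for $(2)\Leftrightarrow(3)$, and Proposition~\ref{uniquely_Camillo_Khurana} for $(3)\Leftrightarrow(4)$. Your write-up simply spells out the individual citations a bit more explicitly than the paper does.
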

\begin{proof}
Note that $R$ is Rickart by Corollary \ref{Rickart_corollary}. Then (1) and (2) are equivalent by  our discussion above. Conditions (2) and (3) are equivalent since $R$ is abelian. Conditions (3) and (4) are equivalent by Proposition \ref{uniquely_Camillo_Khurana}. Finally, conditions (3) and (5) are equivalent by our discussion above.
\end{proof}

\section{Almost cleanness of involutive rings}
\label{section_star_rings}

If $R$ is a ring with involution $*$ (an additive map on $R$ with $(ab)^*=b^*a^*$ and $(a^*)^*=a$ for all $a,b\in R$), it is more natural to
work with projections, self-adjoint idempotents, than idempotents. For example, for rings with involution, the properties of being $*$-regular, Baer $*$-ring or Rickart $*$-ring take over the roles of regular, Baer or (right or left) Rickart, respectively. In \cite{Lia_clean}, the concepts of clean and almost clean rings are adapted to $*$-rings to utilize the presence of an involution: a $*$-ring is {\em $*$-clean} ({\em almost $*$-clean)} if each of its elements is
the sum of a unit (regular element) and a projection. Analogously, we define special $*$-clean and special almost $*$-clean rings by  replacing ``idempotent'' with ``projection'' in the definitions of special clean and special almost clean.
In this section, we adapt some of our earlier results to involutive rings.

First, we have the $*$-version of Proposition \ref{embedding_prop}.

\begin{prop} If a $*$-ring $R$ embeds in a (special) $*$-clean ring that has the same projections as
$R$, then $R$ is (special) almost $*$-clean. If $R$ is also regular, then $R$ is
(special) $*$-clean.
\label{star_embedding_prop}
\end{prop}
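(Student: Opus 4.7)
The plan is to adapt the proof of Proposition \ref{embedding_prop} almost verbatim, replacing ``idempotent'' by ``projection'' and using that the ambient ring $Q$ is (special) $*$-clean rather than (special) clean. Let $Q$ denote a $*$-clean ring containing $R$ as a $*$-subring and having the same projections as $R$, and fix $a\in R$. Writing $a=u+p$ in $Q$ with $u$ a unit and $p$ a projection, the hypothesis that $R$ and $Q$ have the same projections places $p$ in $R$, whence $u=a-p\in R$ as well. As in the original argument, the containments $\ann_r^R(u)\subseteq\ann_r^Q(u)=0$ and $\ann_l^R(u)\subseteq\ann_l^Q(u)=0$ show that $u$ is regular in $R$, so $a=u+p$ is an almost $*$-clean decomposition of $a$ in $R$.

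For the parenthetical ``special'' version, I would assume that $Q$ is special $*$-clean, so the decomposition above additionally satisfies $aQ\cap pQ=0$. Then the trivial inclusion $aR\cap pR\subseteq aQ\cap pQ=0$ produces the required condition in $R$, upgrading the decomposition to a special almost $*$-clean one. The final sentence of the proposition follows from the fact that every regular element of a von Neumann regular ring is a unit: if $u$ is regular in $R$ and $u=uxu$ for some $x\in R$, then $u(1-xu)=0$ and $(1-ux)u=0$, and the regularity of $u$ forces $xu=1=ux$, so $u$ is a unit in $R$ and the (special) almost $*$-clean decomposition is already (special) $*$-clean. Since each step is a one-line adaptation of the corresponding step in the proof of Proposition \ref{embedding_prop}, I do not expect a real obstacle; the only thing to verify carefully is that ``same projections'' (as opposed to ``same idempotents'') is enough to place $p$ back inside $R$, which it is by definition.
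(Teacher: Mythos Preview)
Your proof is correct and follows the same approach as the paper. The only cosmetic difference is that the paper dispatches the non-special case by citing \cite[Proposition 8]{Lia_clean} and writes out only the special version explicitly, whereas you write out both; the underlying argument (pull back the projection, observe $u=a-p\in R$ is regular, and use $aR\cap pR\subseteq aQ\cap pQ=0$ for the special case) is identical.
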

\begin{proof} The statement without ``special'' in the three places is \cite[Proposition 8]{Lia_clean}. Let us consider the statement with ``special'' present.

Let $R$ embed into a special $*$-clean ring $Q$ with the same projections as $R.$ Let $a$ be in $R$ and let  $a=p+u$ be a special $*$-clean decomposition of $a$ in $Q.$ Then, $aR\cap pR\subseteq aQ\cap pQ=0$ and $u=a-p$ is a regular element in $R.$ So $R$ is special almost $*$-clean. If $R$ is also regular, then $u$ is a unit in $R$ as well and therefore $R$ is special $*$-clean.
\end{proof}

We prove the $*$-version of Theorem \ref{Almost_clean_Camillo_Khurana} now.

\begin{theorem} Let $R$ be an abelian $*$-ring. Then $R$ is a Rickart
$*$-ring if and only if $R$ is special almost $*$-clean.
\label{star_almost_clean_Camillo_Khurana}
\end{theorem}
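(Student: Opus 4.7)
The plan is to mirror the proof of Theorem \ref{Almost_clean_Camillo_Khurana} almost verbatim, upgrading ``idempotent'' to ``projection'' wherever appropriate. The computations in that proof rely only on the centrality of the generating idempotent $e$ and on the identity $\ann_r(a)=eR$; no further property of $e$ is used, so they carry over intact once we know that $e$ can be chosen to be a projection.

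For the forward direction, suppose $R$ is an abelian Rickart $*$-ring and fix $a\in R$. By the definition of a Rickart $*$-ring, $\ann_r(a)=pR$ for some projection $p$, and since $R$ is abelian, $p$ is central. Writing $a=p+(a-p)$, the same manipulations as in the ``($\Rightarrow$)'' half of Theorem \ref{Almost_clean_Camillo_Khurana} show that $a-p$ is right regular and that $aR\cap pR=0$. The same appeal to \cite[Proposition 2]{Endo} then gives that $a-p$ is left regular too, hence regular, exhibiting $a=p+(a-p)$ as a special almost $*$-clean decomposition.

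For the converse, assume $R$ is special almost $*$-clean and write $a=p+r$ with $p$ a projection, $r$ regular, and $aR\cap pR=0$. The calculation in the ``($\Leftarrow$)'' half of Theorem \ref{Almost_clean_Camillo_Khurana} shows $\ann_r(a)=pR$, and since $p$ is a projection this says $R$ is a right Rickart $*$-ring. To obtain the full Rickart $*$-ring condition, I would apply the same argument to $a^*$ to get $\ann_r(a^*)=qR$ for some projection $q$, and then take involutions to conclude that $\ann_l(a)=(\ann_r(a^*))^{*}=(qR)^{*}=Rq^{*}=Rq$, so left annihilators are generated by projections as well.

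There is no genuine obstacle here; the argument is essentially bookkeeping. The one point worth checking is that every step in the proof of Theorem \ref{Almost_clean_Camillo_Khurana} uses only the centrality of $e$ together with its idempotency, so that promoting the idempotent to a projection neither strengthens nor weakens any intermediate step. With that verified, the theorem follows by simply replaying the earlier proof in the $*$-setting.
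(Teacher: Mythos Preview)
Your proposal is correct and follows essentially the same approach as the paper: both directions reduce to replaying the proof of Theorem~\ref{Almost_clean_Camillo_Khurana} with projections in place of idempotents, and your handling of left--right symmetry in the converse is exactly the content of the paper's remark that the Rickart $*$-ring definition is left--right symmetric. The one refinement worth noting is that for the forward direction the paper avoids redoing the computation: it invokes the fact that in an abelian Rickart $*$-ring every idempotent is already a projection (\cite[Lemma~3]{Lia_clean}), so the special almost clean decomposition supplied by Theorem~\ref{Almost_clean_Camillo_Khurana} is automatically special almost $*$-clean.
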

\begin{proof} Note that the definition of a Rickart $*$-ring is left-right
symmetric.

If $R$ is an abelian Rickart $*$-ring, then every idempotent is a projection (\cite[Lemma 3]{Lia_clean}). The ring $R$ is special almost clean by Theorem \ref{Almost_clean_Camillo_Khurana}. Hence, it is also special almost $*$-clean.

The converse follows from the proof of Theorem
\ref{Almost_clean_Camillo_Khurana} by assuming that the idempotent $e$ in the proof
of direction  $\Leftarrow$ of Theorem \ref{Almost_clean_Camillo_Khurana} is a
projection.
\end{proof}

Proposition 6 in \cite{Lia_clean}
states the $*$-version of one direction in the Camillo-Khurana Theorem for abelian $*$-rings: if $R$ is
$*$-regular and abelian, then it is special $*$-clean. Interestingly, the almost $*$-clean characterization of abelian Rickart $*$-rings gives us the converse in the abelian case.
\begin{theorem}
Let $R$ be an abelian $*$-ring. Then $R$ is $*$-regular if and only if $R$ is special $*$-clean.
\label{star_clean_Camillo_Khurana}
\end{theorem}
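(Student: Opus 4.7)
The plan is to combine the newly established Theorem \ref{star_almost_clean_Camillo_Khurana} with the classical Camillo-Khurana Theorem. The forward direction ($\Rightarrow$) is exactly \cite[Proposition 6]{Lia_clean}, already recorded in the paragraph preceding the theorem statement, so I would simply invoke it and concentrate on the converse.

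Assume $R$ is abelian and special $*$-clean. The key observation is that a single special $*$-clean decomposition $a=p+u$ can be read in \emph{two} different ways. Since every projection is an idempotent, it is a special clean decomposition of $a$; since every unit is a regular element, it is simultaneously a special almost $*$-clean decomposition of $a$. Hence $R$ is both special clean and special almost $*$-clean. The first property, via the Camillo-Khurana Theorem (Theorem \ref{Camillo_Khurana}), gives that $R$ is unit-regular, and in particular von Neumann regular. The second property, via Theorem \ref{star_almost_clean_Camillo_Khurana}, gives that the abelian $*$-ring $R$ is a Rickart $*$-ring.

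To close the argument I would pass from ``regular plus abelian Rickart $*$-ring'' to ``$*$-regular''. By \cite[Lemma 3]{Lia_clean} (already invoked in the proof of Theorem \ref{star_almost_clean_Camillo_Khurana}), every idempotent of an abelian Rickart $*$-ring is a projection. Since $R$ is regular, for each $a\in R$ one may choose $x\in R$ with $axa=a$ so that $e=ax$ is an idempotent satisfying $aR=eR$; by the preceding sentence this $e$ is in fact a projection. Therefore every principal right ideal of the regular ring $R$ is generated by a projection, which is precisely the characterization of $*$-regularity.

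There is essentially no obstacle left once the ``dual reading'' of the hypothesis is noticed: both halves needed to establish $*$-regularity then fall out of results proved or cited earlier in the paper, and the final passage is by definition-chasing. The only point I would be careful to verify in writing is that the idempotent $e$ delivered by regularity of $a$ can indeed be chosen so that $aR=eR$, so that the ``idempotents are projections'' property translates into the desired property of principal right ideals.
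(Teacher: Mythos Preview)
Your argument is correct and follows essentially the same route as the paper: for the converse you read the special $*$-clean hypothesis both as special clean (hence unit-regular via Camillo--Khurana) and as special almost $*$-clean (hence Rickart $*$ via Theorem~\ref{star_almost_clean_Camillo_Khurana}), then conclude $*$-regularity from ``regular plus Rickart $*$-ring''. The only difference is cosmetic: the paper quotes the last implication as a known fact, whereas you unpack it via the ``idempotents are projections'' lemma, which is a perfectly valid elaboration of the same step.
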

\begin{proof} A $*$-regular and abelian $*$-ring is special $*$-clean by \cite[Proposition 6]{Lia_clean}. Alternatively, this also follows from our Theorem \ref{star_almost_clean_Camillo_Khurana}.  Namely, if $R$ is an abelian $*$-regular ring, then $R$ is a Rickart $*$-ring. Hence, $R$ is special almost $*$-clean. In addition, $R$ is regular and so every regular element of $R$ is a unit. Thus, $R$ is special $*$-clean.

To prove the converse, assume that $R$ is special $*$-clean. Then $R$ is special almost $*$-clean as well and so it is a Rickart $*$-ring by Theorem \ref{star_almost_clean_Camillo_Khurana}. Moreover, $R$ is a special clean ring and so it is unit-regular. A Rickart $*$-ring that is regular is $*$-regular. This proves the assertion.
\end{proof}

We note that the uniqueness of the special (almost) $*$-clean decomposition can be added to Theorems \ref{star_almost_clean_Camillo_Khurana} and \ref{star_clean_Camillo_Khurana}.

\begin{corollary} Let $R$ be an abelian $*$-ring.
\begin{enumerate}
\item If $R$ is $*$-regular, then $R$ is uniquely special $*$-clean.

\item $R$ is Rickart, then $R$ is uniquely special almost $*$-clean.
\end{enumerate}
\label{star_uniqueness}
\end{corollary}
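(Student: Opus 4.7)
The plan is to reduce both parts to uniqueness results already established for ordinary (non-involutive) special (almost) clean decompositions, using the trivial observation that a projection is an idempotent, so every special (almost) $*$-clean decomposition is in particular a special (almost) clean decomposition.

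For part (1), first invoke Theorem \ref{star_clean_Camillo_Khurana} to get that $R$ is special $*$-clean. Since $*$-regular implies regular and an abelian regular ring is unit-regular (Goodearl, \cite[Corollary 4.2]{Goodearl}), Proposition \ref{uniquely_Camillo_Khurana} applies and tells us $R$ is uniquely special clean. If $a = p + u = p' + u'$ are two special $*$-clean decompositions of an element $a \in R$, then both are special clean decompositions in the sense of Proposition \ref{uniquely_Camillo_Khurana}, so $p = p'$ and $u = u'$. This gives uniqueness of the special $*$-clean decomposition.

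For part (2), invoke Theorem \ref{star_almost_clean_Camillo_Khurana} to get that $R$ is special almost $*$-clean. The key observation for uniqueness is buried in the proof of the converse direction of Theorem \ref{Almost_clean_Camillo_Khurana}: for \emph{any} special almost clean decomposition $a = e + r$ in an abelian ring one has $\ann_r(a) = eR$. Thus if $a = p + r = p' + r'$ are two special almost $*$-clean decompositions, then $pR = \ann_r(a) = p'R$. In an abelian ring, idempotents generating the same principal right ideal coincide: $pR = p'R$ gives $p = p'x$ and $p' = py$, so $p'p = p'(p'x) = p'x = p$ and $pp' = p(py) = py = p'$, and centrality of idempotents yields $p = p'p = pp' = p'$. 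Hence $p = p'$ and consequently $r = a - p = a - p' = r'$.

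The proof has no real obstacle; the only subtlety is recognising that the uniqueness of the idempotent in a special almost clean decomposition is already implicit in the formula $\ann_r(a) = eR$ from the proof of Theorem \ref{Almost_clean_Camillo_Khurana}, so one does \emph{not} need to re-route through the right quasi-continuous hypothesis of Corollary \ref{uniquely_special_almost_clean}. In particular, the hypothesis ``Rickart'' in (2) (as opposed to ``Rickart $*$-ring'') is enough, because in an abelian Rickart $*$-ring every idempotent is already a projection by \cite[Lemma 3]{Lia_clean}, so the abelian Rickart hypothesis and the presence of an involution together bring us back into the setting of Theorem \ref{star_almost_clean_Camillo_Khurana}.
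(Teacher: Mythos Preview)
Your argument for part (1) coincides with the paper's: both reduce to Proposition \ref{uniquely_Camillo_Khurana} after observing that an abelian $*$-regular ring is unit-regular and that every special $*$-clean decomposition is in particular special clean.

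For part (2) you take a genuinely different and more self-contained route. The paper simply invokes Corollary \ref{uniquely_Rickart_corollary}, which carries a right quasi-continuity hypothesis not present in the statement of Corollary \ref{star_uniqueness}; the paper's one-line justification is therefore rather elliptic. Your observation that the converse direction of Theorem \ref{Almost_clean_Camillo_Khurana} already yields $\ann_r(a)=eR$ for \emph{any} special almost clean decomposition $a=e+r$ in an abelian ring gives uniqueness of the idempotent (hence of the whole decomposition) immediately, without passing through $\Qrmax(R)$ or quasi-continuity. That is the cleaner argument.

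One caution about your final paragraph: the claim that the plain hypothesis ``Rickart'' (rather than ``Rickart $*$-ring'') already suffices is not justified by what you wrote---you appeal to \cite[Lemma 3]{Lia_clean}, a statement about abelian Rickart $*$-rings, in order to conclude that $R$ is a Rickart $*$-ring, which is circular. In fact the implication fails: $\Cset\times\Cset$ with the involution $(a,b)^*=(\overline{b},\overline{a})$ is abelian and Rickart, but its only projections are $0$ and $1$, so it is neither a Rickart $*$-ring nor special almost $*$-clean (the element $(1,0)$ has no such decomposition). The hypothesis in (2) should be read as ``Rickart $*$-ring'', exactly as the paper's own proof does when it asserts ``in both cases $R$ is an abelian Rickart $*$-ring''; under that reading your existence step via Theorem \ref{star_almost_clean_Camillo_Khurana} and your uniqueness step both go through.
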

\begin{proof}
In both cases, $R$ is an abelian Rickart $*$-ring and so all the idempotents are projections. Therefore, the two assertions follow from Proposition \ref{uniquely_Camillo_Khurana} and Corollary \ref{uniquely_Rickart_corollary}.
\end{proof}

\section{Questions}
\label{section_questions}

We conclude the paper with a list of questions.
\begin{enumerate}
\item The ring in the Camillo-Khurana Theorem is not assumed to be abelian. We wonder if the assumption that $R$ is abelian in Theorems \ref{Almost_clean_Camillo_Khurana} and
\ref{star_almost_clean_Camillo_Khurana} can be weakened. Example \ref{Rickart_example} shows that it cannot be completely eliminated from Theorem \ref{Almost_clean_Camillo_Khurana}.

We also wonder if the assumption that $R$ is abelian can be weakened in Proposition \ref{uniquely_Camillo_Khurana} and Theorem \ref{star_clean_Camillo_Khurana}.

\item Recall that a quasi-continuous module $M$ is continuous if and only if each essential monomorphism in $\End(M)$ is an isomorphism (\cite[Lemma 3.14]{Mohamed_Muller}). If we denote the last condition by (C), we wonder if (C) is a condition for
an almost clean module to be clean.

$$\begin{array}{ccc}
\begin{array}{|c|}\hline
\mbox{ quasi-continuous } \\\hline
\end{array} &
\begin{array}{c}
\mbox{nonsingular}\\
\longrightarrow
\end{array}
&
\begin{array}{|c|}\hline
\mbox{ almost clean } \\\hline
\end{array} \\
\uparrow\;\;\;\;\;\; \downarrow\mbox{iff (C)} & &\uparrow\;\;\;\;\;\; \downarrow\mbox{iff (C)?} \\
\begin{array}{|c|}\hline
\mbox{ continuous } \\\hline
\end{array}
 & \longrightarrow &
\begin{array}{|c|}\hline
\;\;\;\mbox{ clean }\;\;\; \\\hline
\end{array}\;\;\;\;
\end{array}
$$

If (C) guaranteed that almost clean rings are clean, this would imply that a right quasi-continuous, right nonsingular, and clean ring is right continuous. Note that this statement holds if ``clean'' is replaced by ``regular''. More specifically, we wonder whether ``$R$ is clean'' can be added to the list of equivalent conditions in Proposition \ref{equivalences}, i.e., whether an abelian, right quasi-continuous, right nonsingular clean ring is right continuous.

\item All abelian $AW^*$-algebras are uniquely special almost $*$-clean by Corollary \ref{star_uniqueness}. All finite, type $I$ $AW^*$-algebras are almost $*$-clean by \cite[Corollary 14]{Lia_clean}. Finally, all finite $AW^*$-algebras are almost clean by Corollary \ref{AW_Leavitt_etc_corollary}. The diagram below illustrates these statements graphically.
$$
\begin{array}{ccccc}
\mbox{abelian $AW^*$-alg.} & \longrightarrow &  \mbox{finite, type $I$ $AW^*$-alg.} & \longrightarrow & \mbox{finite $AW^*$-alg. }\\
\downarrow & & \downarrow & & \downarrow\\
\mbox{uniquely special almost $*$-clean} & \longrightarrow & \mbox{almost $*$-clean}&\longrightarrow & \mbox{almost clean}
\end{array}
$$

We wonder if any of the statements in the columns of this diagram can be strengthened. In particular, we wonder if finite, type $I$ $AW^*$-algebras are special almost $*$-clean and if finite $AW^*$-algebras are (special) almost $*$-clean. We also wonder if any of these algebras are ($*$-)clean.
\end{enumerate}

\end{document}